\def\fl#1{\left\lfloor#1\right\rfloor}
\def\stif#1#2{\left[#1\atop#2\right]}
\def\sts#1#2{\left\{#1\atop#2\right\}}
\def\sttf2#1#2{\left[\!\!\left[#1\atop#2\right]\!\!\right]}
\def\stf3f#1#2{\left[\!\!\left[\!\!\left[#1\atop#2\right]\!\!\right]\!\!\right]}
\def\stff4#1#2{\left[\!\!\left[\!\!\left[\!\!\left[#1\atop#2\right]\!\!\right]\!\!\right]\!\!\right]}
\def\stss2#1#2{\left\{\!\!\left\{#1\atop#2\right\}\!\!\right\}}
\newtheorem{theorem}{Theorem}
\def \Sg{\mathfrak S} 
\begin{document} 

\title{Stirling numbers with level $2$ and poly-Bernoulli numbers with level $2$}  

\author{ 
Takao Komatsu
\\ 
\small Department of Mathematical Sciences, School of Science\\
\small Zhejiang Sci-Tech University\\
\small Hangzhou 310018 China\\
\small \texttt{komatsu@zstu.edu.cn}
}

\date{
}

\maketitle 

\begin{abstract}  
In this paper, we introduce poly-Bernoulli numbers with level $2$, related to the Stirling numbers of the second kind with level $2$, and study several properties of poly-Bernoulli numbers with level $2$ from their expressions, relations, and congruences. Poly-Bernoulli numbers with level $2$ have strong connections with poly-Cauchy numbers with level $2$.  
In a special case, we can determine the denominators of Bernoulli numbers with level $2$ by showing a von Staudt-Clausen like theorem.    

\noindent 
{\bf Keywords:} Stirling numbers, poly-Bernoulli numbers, congruences, von Staudt-Clausen theorem. 

\noindent 
{\bf MR Subject Classifications:} 11B73, 05A15, 05A19, 11A07, 11B37, 11B68, 11B75.
\end{abstract}

\section{Introduction} 

Let $\Sg_n$ denote the set of permutations of the set  $[n]:=\{1, 2, \dots, n\}$. 
For $n, k \geq 0$, let  $\Sg_{(n,k)}$ denote the set of permutations of $\Sg_n$  having exactly $k$ cycles, satisfying  $\Sg_n=\cup_{k=0}^n\Sg_{(n,k)}$. Let $s$ be a positive integer.  The Stirling numbers of the first kind with level $s$, denoted by $\sttf2{n}{k}_s$ (\cite{KRV1}), are defined as the number of ordered $s$-tuples $(\sigma_1, \sigma_2, \dots, \sigma_s)\in \Sg_{(n,k)}\times \Sg_{(n,k)} \times \cdots \times \Sg_{(n,k)}=\Sg_{(n,k)}^s$, such that
$$ 
\min(\sigma_1)=\min(\sigma_2)=\cdots=\min(\sigma_s)\,. 
$$  
The Stirling numbers of the first kind with higher level satisfies the recurrence relation
$$ 
\sttf2{n}{k}_s=\sttf2{n-1}{k-1}_s+(n-1)^s\sttf2{n-1}{k} 
$$ 
with the initial conditions $\sttf2{0}{0}_s=1$ and $\sttf2{n}{0}_s=\sttf2{0}{n}_s=0$ ($n\geq 1$).  
The Stirling numbers of the first kind with higher level are yielded from the coefficients of the polynomial as 
$$
x(x+1^s)(x+2^s)\cdots(x+(n-1)^s)=\sum_{k=0}^n\sttf2{n}{k}_s x^k\,.
$$ 

When $s=1$, 
$$
\stif{n}{k}=\sttf2{n}{k}_1=|\Sg_{(n,k)}|
$$ 
are the original (unsigned) Stirling numbers of the first kind.  
When $s=2$, the Stirling numbers of the first kind with level $2$ (\cite{Ko20}) are related with the central factorial numbers of the first kind $t(n,k)$ (\cite{BSSV}) as $\sttf2{n}{k}_2=t(2 n, 2 k)$.    
Notice that the original Stirling numbers of the first kind and the Stirling numbers of the first kind with level $2$ are used to express poly-Cauchy numbers (\cite{Ko1}) and poly-Cauchy numbers with level $2$ (\cite{Ko20,KP}), respectively.   


On the other hand, for $n, k \geq 0$, let  $\Pi_{(n,k)}$ denote the set of all partitions of $[n]$  having exactly $k$ non-empty blocks. 
Given a partition $\pi$ in $\Pi_n$, let $\min(\pi)$ denote the set of the minimal elements in each block of $\pi$.  For a positive integer $s$, the Stirling numbers of the second kind with level $s$, denoted by $\stss2{n}{k}_s$ (\cite{KRV2}), are defined as the number of ordered $s$-tuples $(\pi_1, \pi_2, \dots, \pi_s)\in \Pi_{(n,k)}\times \Pi_{(n,k)} \times \cdots \times \Pi_{(n,k)}=\Pi_{(n,k)}^s$, such that
\begin{align}\label{mincond}
\min(\pi_1)=\min(\pi_2)=\cdots=\min(\pi_s)\,.
\end{align}
The Stirling numbers of the second kind with higher level satisfies the recurrence relation
$$ 
\stss2{n}{k}_s=\stss2{n-1}{k-1}_s+k^s\stss2{n-1}{k}_s\,.
$$ 
with the initial conditions $\stss2{0}{0}_s=1$ and $\stss2{n}{0}_s=\stss2{0}{n}_s=0$ ($n\geq 1$).  
The Stirling numbers of the second kind with higher level are yielded from the coefficients of the polynomial as 
\begin{equation}
x^n=\sum_{k=0}^n\stss2{n}{k}_s x(x-1^s)(x-2^s)\cdots\bigl(x-(k-1)^s\bigr)\,.
\label{def:stss}
\end{equation}

When $s=1$, 
$$
\sts{n}{k}=\stss2{n}{k}_1
$$ 
are the original Stirling numbers of the second kind.  
When $s=2$, we have that $\stss2{n}{k}_2=T(2 n, 2 k)$, where $T(n, k)$ are the central factorial numbers of the second kind ({\it cf.} \cite{BSSV,CR}), satisfying 
\begin{equation}
x^n=\sum_{k=0}^n T(n,k) x\bigl(x+\frac{k}{2}-1\bigr)\bigl(x+\frac{k}{2}-2\bigr)\cdots\bigl(x-\frac{k}{2}+1\bigr)\,.
\end{equation}

As the original Stirling numbers of the second kind are used to express poly-Bernoulli numbers explicitly (\cite{Kaneko}), we intend to introduce poly-Bernoulli numbers with level $2$, related to the Stirling numbers of the second kind with level $2$.  
In this paper, we study several properties of poly-Bernoulli numbers with level $2$ from their expressions, relations, and congruences. Poly-Bernoulli numbers with level $2$ have strong connections with poly-Cauchy numbers with level $2$.  
In a special case, we can determine the denominators of Bernoulli numbers with level $2$ by showing a von Staudt-Clausen like theorem.

\section{Some expressions}   

In \cite{KRV2}, the Stirling numbers of the second kind with higher level are expressed explicitly as 
\begin{equation}
\stss2{n}{k}_s=\sum_{j=1}^k\frac{j^{n s}}{\prod_{i=0, i\neq j}^k(j^s-i^s)}\quad(1\le k\le n)\,.  
\label{ex:10} 
\end{equation}   
When $s=1$, by 
\begin{equation}
(k-j)!j!=(-1)^{k-j}\prod_{i=0\atop i\neq j}^k(j-i)\,, 
\label{ex:20} 
\end{equation} 
this is reduced to a famous expression of the original Stirling numbers of the second kind:
\begin{equation}
\sts{n}{k}=\frac{1}{k!}\sum_{j=1}^k(-1)^{k-j}\binom{k}{k-j}j^{n}\quad(1\le k\le n)\,.   
\label{ex:30} 
\end{equation}  
When $s=2$, (\ref{ex:10}) is reduced to an expression 
\begin{equation}
\stss2{n}{k}_2=\frac{2}{(2 k)!}\sum_{j=1}^k(-1)^{k-j}\binom{2 k}{k-j}j^{2 n}\quad(1\le k\le n)   
\label{ex:32} 
\end{equation}  
(\cite[Proposition 2.4 (xiii)]{BSSV}). However, no explicit expression for $s\ge 3$ has not been found yet. It implies that unfortunately, 
$$ 
\stss2{n}{k}_3\ne \frac{3}{(3 k)!}\sum_{j=1}^k(-1)^{k-j}\binom{3 k}{k-j}j^{3 n}   
$$ 
or something like this.  

There exists a different explicit expression from (\ref{ex:10}). 

\begin{theorem}  
For integers $n$ and $k$ with $2\le k\le n$, 
$$ 
\stss2{n}{k}_s=\sum_{j=1}^{k-1}\frac{j^{(k-1)s}(j^{(n-k+1)s}-k^{(n-k+1)s})}{\prod_{i=0, i\neq j}^k(j^s-i^s)}  
$$ 
with $\stss2{n}{1}_s=1$ ($n\ge 1$). 
\label{th:100} 
\end{theorem}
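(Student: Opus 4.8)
The plan is to derive the claimed formula directly from the explicit expression (\ref{ex:10}) by an algebraic splitting of the summand, combined with a standard vanishing identity for divided differences. For $k=1$ the formula $\stss2{n}{1}_s=1$ is immediate — either from the recurrence (since $\stss2{n}{1}_s=\stss2{n-1}{1}_s$ and $\stss2{1}{1}_s=1$) or by evaluating (\ref{ex:10}) directly — so I would assume $2\le k\le n$ from here on.

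First I would write, for each $1\le j\le k$,
$$
j^{ns}=j^{(k-1)s}\bigl(j^{(n-k+1)s}-k^{(n-k+1)s}\bigr)+k^{(n-k+1)s}\,j^{(k-1)s}
$$
and substitute this into (\ref{ex:10}), splitting $\stss2{n}{k}_s$ into two sums. In the first sum the $j=k$ term vanishes, because the factor $j^{(n-k+1)s}-k^{(n-k+1)s}$ is $0$ there; hence that sum effectively runs over $1\le j\le k-1$ and is exactly the right-hand side of the theorem. It therefore remains to show that the second sum is zero, i.e. that
$$
\sum_{j=1}^{k}\frac{j^{(k-1)s}}{\prod_{i=0,\,i\neq j}^{k}(j^s-i^s)}=0\,.
$$

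For this I would invoke the following elementary fact about divided differences: if $x_0,x_1,\dots,x_k$ are $k+1$ distinct numbers and $p$ is any polynomial with $\deg p\le k-1$, then
$$
\sum_{j=0}^{k}\frac{p(x_j)}{\prod_{i=0,\,i\neq j}^{k}(x_j-x_i)}=0\,,
$$
since the left-hand side is the coefficient of $x^k$ in the degree-$\le k$ Lagrange interpolant of $p$ at the nodes, and that interpolant is $p$ itself, of degree $\le k-1$. Applying this with $x_j=j^s$ (which are distinct for $0\le j\le k$) and $p(x)=x^{k-1}$, so that $p(x_j)=j^{(k-1)s}$, gives
$$
\sum_{j=0}^{k}\frac{j^{(k-1)s}}{\prod_{i=0,\,i\neq j}^{k}(j^s-i^s)}=0\,.
$$
Because $k\ge 2$, the $j=0$ term is $0^{(k-1)s}\big/\prod_{i=1}^{k}(-i^s)=0$, so the sum over $1\le j\le k$ also vanishes, which finishes the proof.

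The only genuinely nontrivial ingredient is the divided-difference vanishing identity, and even that is textbook; the part that needs care is the bookkeeping of index ranges — checking that the $j=k$ term drops out of the first sum and (using $k\ge 2$) that the $j=0$ term drops out of the vanishing sum. As alternatives, one could prove the vanishing identity from scratch via the partial-fraction expansion of $1/\prod_{i=0}^{k}(x-i^s)$, or establish the whole theorem by induction on $k$ using the recurrence for $\stss2{n}{k}_s$, but the interpolation route is the cleanest.
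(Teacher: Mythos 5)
Your proof is correct, and it is essentially the argument the paper intends: Theorem \ref{th:100} is stated there without proof, but your splitting $j^{ns}=j^{(k-1)s}\bigl(j^{(n-k+1)s}-k^{(n-k+1)s}\bigr)+k^{(n-k+1)s}j^{(k-1)s}$ in (\ref{ex:10}) reduces everything to the vanishing of $\sum_{j=1}^{k}j^{(k-1)s}\big/\prod_{i=0,\,i\neq j}^{k}(j^s-i^s)$, which is exactly the case $n=k-1$ of the identity $\sum_{j=1}^{k}j^{ns}\big/\prod_{i=0,\,i\neq j}^{k}(j^s-i^s)=0$ $(1\le n\le k-1)$ that the paper itself invokes in the proof of Theorem \ref{th:200}. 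Your Lagrange-interpolation (divided-difference) argument moreover supplies a proof of that vanishing, and you handle the boundary terms correctly: the $j=k$ term of the first sum dies because of the factor $j^{(n-k+1)s}-k^{(n-k+1)s}$, and the $j=0$ node contributes nothing since $p(0)=0^{k-1}=0$ for $k\ge 2$.
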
  

\noindent 
{\it Remark.}  
When $s=1$ in Theorem \ref{th:100}, by (\ref{ex:20}) and 
$$ 
\sum_{j=1}^k(-1)^j\binom{k}{j}j^{k-1}=0\,, 
$$  
we have (\ref{ex:30}) again.  

In \cite{KRV2}, the ordinary generating function of the Stirling numbers of the second kind with higher level is given by 
$$ 
\sum_{n=k}^\infty\stss2{n}{k}_s x^n=\frac{x^k}{(1-x)(1-2^s x)\cdots(1-k^s x)}\,. 
$$ 

The exponential generating function of the Stirling numbers of the second kind with higher level is given as follows.   

\begin{theorem}  
For $k\ge 1$, 
$$
\sum_{n=k}^\infty\stss2{n}{k}_s\frac{x^n}{n!}=\sum_{j=1}^k\frac{e^{j^s x}}{\prod_{i=1, i\ne j}^k(j^s-i^s)}+\frac{(-1)^k}{(k!)^s}\,. 
$$ 
\label{th:200}  
\end{theorem}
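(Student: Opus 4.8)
\noindent The plan is to substitute the closed form (\ref{ex:10}), which holds for all $n\ge k\ge 1$, into the left-hand side of Theorem~\ref{th:200} and interchange the two summations. Writing $D_j:=\prod_{i=0,\,i\ne j}^{k}(j^s-i^s)$, this should give
\[
\sum_{n=k}^\infty\stss2{n}{k}_s\frac{x^n}{n!}
=\sum_{j=1}^k\frac{1}{D_j}\sum_{n=k}^\infty\frac{(j^sx)^n}{n!}
=\sum_{j=1}^k\frac{e^{j^sx}}{D_j}-\sum_{n=0}^{k-1}\frac{x^n}{n!}\sum_{j=1}^k\frac{(j^s)^n}{D_j},
\]
where in the last step I complete each inner series to the full exponential $e^{j^sx}$ and then interchange the order of summation once more. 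The rearrangements are legitimate because for fixed $x$ the $j$-sum is finite and each exponential series converges absolutely (alternatively one may read the whole identity in $\C[[x]]$). After this step the first sum already supplies the exponential part $\sum_{j=1}^k e^{j^sx}/D_j$ of the right-hand side, so everything reduces to showing that the finite sums $T_n:=\sum_{j=1}^k(j^s)^n/D_j$ obey $-\sum_{n=0}^{k-1}(x^n/n!)\,T_n=(-1)^k/(k!)^s$.

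\noindent The key observation is that $T_n$ differs only by its absent $j=0$ term from the fully symmetric sum $\sum_{j=0}^k(j^s)^n\big/\prod_{i=0,\,i\ne j}^k(j^s-i^s)$, which is precisely the coefficient of $y^k$ in the Lagrange interpolating polynomial of the map $y\mapsto y^n$ through the $k+1$ distinct nodes $0^s,1^s,\dots,k^s$ --- equivalently, the $k$-th divided difference of $y^n$ at those nodes. Since $y^n$ is a polynomial of degree $n\le k-1<k$, that interpolant is $y^n$ itself, so its $y^k$-coefficient vanishes, whence $T_n=-\,0^{ns}\big/\prod_{i=1}^{k}(0-i^s)$. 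For $1\le n\le k-1$ this is $0$, while for $n=0$ it equals $-1\big/\bigl((-1)^k(k!)^s\bigr)=-(-1)^k/(k!)^s$. Consequently only the $n=0$ term of the correction survives, and it contributes $-T_0=(-1)^k/(k!)^s$, which is the extra constant in the statement; this would finish the proof.

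\noindent The one step I expect to be genuinely non-mechanical is the identification of $T_n$ with a vanishing divided difference, i.e. the elementary identity $\sum_{j=0}^k f(j^s)\big/\prod_{i\ne j}(j^s-i^s)=0$ valid whenever $\deg f<k$; everything else is routine manipulation of the geometric and exponential series. As an alternative, and arguably cleaner, route I would instead start from the known ordinary generating function $\sum_{n\ge k}\stss2{n}{k}_s x^n=x^k/\prod_{j=1}^k(1-j^sx)$: its partial-fraction decomposition has simple poles at $x=1/j^s$ together with a constant term equal to the value at $x=\infty$, namely $(-1)^k/(k!)^s$, and passing term by term to exponential generating functions via $1/(1-j^sx)\mapsto e^{j^sx}$ (with the constant unchanged) would yield the same identity, with the extra constant appearing transparently.
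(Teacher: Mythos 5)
Your proof is correct and is essentially the paper's own argument: substitute the explicit formula (\ref{ex:10}), complete each inner series to $e^{j^s x}$, and evaluate the low-order corrections, which vanish for $1\le n\le k-1$ and give $(-1)^k/(k!)^s$ at $n=0$; the only difference is that you actually justify these two evaluations (via the vanishing $k$-th divided difference of $y^n$, $n<k$, at the nodes $0^s,1^s,\dots,k^s$), which the paper merely asserts, and you sketch an alternative partial-fraction route from the ordinary generating function. One caveat: what you derive (exactly as in the paper's own computation) has $\prod_{i=0,\,i\ne j}^{k}(j^s-i^s)$ in the denominators of the exponential terms, and this is the correct form --- the $\prod_{i=1,\,i\ne j}^{k}$ in the printed statement is a typo (already for $s=1$, $k=2$ it would give $e^{2x}-e^{x}+\tfrac12$ instead of the correct $\tfrac12 e^{2x}-e^{x}+\tfrac12$), so your tacit identification of $D_j$ with the stated denominator should be made explicit as a correction to the statement rather than passed over silently.
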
  

\noindent 
{\it Remark.}  
1) When $s=1$ in Theorem \ref{th:200}, we have the exponential generating function of the original Stirling numbers of the second kind: 
\begin{align*}  
\sum_{n=k}^\infty\sts{n}{k}\frac{x^n}{n!}&=\frac{1}{k!}\left(\sum_{j=1}^k\frac{k!}{(k-j)!j!}(e^x)^j(-1)^{k-j}+(-1)^k\right)\\
&=\frac{(e^x-1)^k}{k!}\,. 
\end{align*}   

\noindent
2) Another variation is similarly shown as follows.  
$$
\sum_{n=k}^\infty\stss2{n}{k}_s\frac{x^{n s}}{(n s)!}=\frac{1}{s}\sum_{\ell=0}^{s-1}\sum_{j=1}^k\frac{e^{\zeta^\ell j x}}{\prod_{i=1, i\ne j}^k(j^s-i^s)}+\frac{(-1)^k}{(k!)^s}\,. 
$$  
where $\zeta:=e^{2\pi i/s}$ is the $s$-th root of unity.  

\begin{proof}[Proof of Theorem \ref{th:200}] 
\par  
1)  
By (\ref{ex:10}), we have 
\begin{align*}
\sum_{n=k}^\infty\stss2{n}{k}_s\frac{x^n}{n!}
&=\sum_{n=k}^\infty\sum_{j=1}^k\frac{j^{n s}}{\prod_{i=0, i\neq j}^k(j^s-i^s)}\frac{x^n}{n!}\\  
&=\sum_{j=1}^k\frac{1}{\prod_{i=0, i\neq j}^k(j^s-i^s)}\sum_{n=0}^\infty j^{n s}\frac{x^n}{n!}\\
&\quad -\sum_{j=1}^k\frac{1}{\prod_{i=0, i\neq j}^k(j^s-i^s)}\sum_{n=0}^{k-1}j^{n s}\frac{x^n}{n!}\\
&=\sum_{j=1}^k\frac{e^{j^s x}}{\prod_{i=1, i\ne j}^k(j^s-i^s)}+\frac{(-1)^k}{(k!)^s}\,. 
\end{align*} 
Here,  
$$
\sum_{j=1}^k\frac{1}{\prod_{i=0, i\neq j}^k(j^s-i^s)}=\frac{(-1)^{k-1}}{(k!)^s}
$$ 
and
$$  
\sum_{j=1}^k\frac{j^{n s}}{\prod_{i=0, i\neq j}^k(j^s-i^s)}=0\quad(1\le n\le k-1)\,.
$$ 
\par\noindent   
2) By Theorem \ref{th:100}, we have  
\begin{align*}
\sum_{n=k}^\infty\stss2{n}{k}_s\frac{x^n}{n!} 
&=\sum_{n=k}^\infty\sum_{j=1}^{k-1}\frac{j^{(k-1)s}(j^{(n-k+1)s}-k^{(n-k+1)s})}{\prod_{i=0, i\neq j}^k(j^s-i^s)}\\  
&=\sum_{j=1}^k\frac{1}{\prod_{i=0, i\neq j}^k(j^s-i^s)}\sum_{n=0}^\infty j^{n s}\frac{x^n}{n!}\\
&\quad -\sum_{j=1}^k\frac{1}{\prod_{i=0, i\neq j}^k(j^s-i^s)}\sum_{n=0}^\infty\left(\frac{j}{k}\right)^{(k-1)s}k^{n s}\frac{x^n}{n!}\\
&\quad -\sum_{j=1}^k\frac{1}{\prod_{i=0, i\neq j}^k(j^s-i^s)}\sum_{n=0}^{k-1}j^{n s}\frac{x^n}{n!}\\
&\quad +\sum_{j=1}^k\frac{1}{\prod_{i=0, i\neq j}^k(j^s-i^s)}\sum_{n=0}^{k-1}\left(\frac{j}{k}\right)^{(k-1)s}k^{n s}\frac{x^n}{n!}\\
&=\sum_{j=1}^k\frac{e^{j^s x}}{\prod_{i=1, i\ne j}^k(j^s-i^s)}-0\\
&-\sum_{j=1}^k\frac{1}{\prod_{i=0, i\neq j}^k(j^s-i^s)}\sum_{n=0}^{k-1}j^{n s}\frac{x^n}{n!}+0\\
&=\sum_{j=1}^k\frac{e^{j^s x}}{\prod_{i=1, i\ne j}^k(j^s-i^s)}+\frac{(-1)^k}{(k!)^s}\,. 
\end{align*} 
\end{proof}

\section{Poly-Bernoulli numbers with level $2$} 

Poly-Bernoulli numbers are defined by the generating function 
\begin{equation}  
\frac{{\rm Li}_k(1-e^{-x})}{1-e^{-x}}=\sum_{n=0}^\infty B_n^{(k)}\frac{x^n}{n!}
\label{def:pber}
\end{equation}
(\cite{Kaneko}), where 
$$
{\rm Li}_k(z)=\sum_{n=1}^\infty\frac{z^n}{n^k}
$$ 
is the polylogarithm function. 

When $k=1$, it is reduced to the generating function of the original Bernoulli numbers:  
$$
\frac{x}{1-e^{-x}}=\sum_{n=0}^\infty B_n^{(1)}\frac{x^n}{n!} 
$$ 
with $B_1^{(1)}=1/2$. Another definition is given by 
\begin{equation}
\frac{x}{e^{x}-1}=\sum_{n=0}^\infty B_n\frac{x^n}{n!} 
\label{def:ber}
\end{equation} 
with $B_1=-1/2$. 
 
Then, poly-Bernoulli numbers can be expressed explicitly in terms of the Stirling numbers of the second kind (\cite[Theorem 1]{Kaneko}):   
\begin{equation}  
B_n^{(k)}=\sum_{m=0}^n\sts{n}{m}\frac{(-1)^{n-m}m!}{(m+1)^k}\,. 
\label{exp:pber}
\end{equation}
There have been many generalizations of Bernoulli or poly-Bernoulli numbers. In this paper, we introduced poly-Bernoulli numbers with level $2$ by using higher-level Stirling numbers (\cite{KRV1,KRV2}). 

In \cite{KP}, {\it poly-Cauchy numbers}  $\mathfrak C_n^{(k)}$ {\it with level $2$} are defined by  
\begin{equation}  
{\rm Lif}_{2,k}({\rm arcsinh}x)=\sum_{n=0}^\infty\mathfrak C_n^{(k)}\frac{x^n}{n!}\,,  
\label{def:pcaulevel2}  
\end{equation}  
where  ${\rm arcsinh}x$ is the inverse hyperbolic sine function and  
$$
{\rm Lif}_{2,k}(z)=\sum_{m=0}^\infty\frac{z^{2 m}}{(2 m)!(2 m+1)^k}\,. 
$$ 
The function ${\rm Lif}_{2,k}(z)$ is an analogue of {\it polylogarithm factorial} or {\it polyfactorial} function ${\rm Lif}_k(z)$ \cite{Ko1,Ko2}, defined by  
$$
{\rm Lif}_k(z)=\sum_{m=0}^\infty\frac{z^{m}}{m!(m+1)^k}\,. 
$$ 
By using the polyfactorial function, poly-Cauchy numbers (of the first kind) $c_n^{(k)}$ are defined as 
\begin{equation}
{\rm Lif}_k\bigl(\log(1+x)\bigr)=\sum_{n=0}^\infty c_n^{(k)}\frac{x^n}{n!}\,.
\label{def:pcau} 
\end{equation} 
When $k=1$, by ${\rm Lif}_1(z)=(e^z-1)/z$, $c_n=c_n^{(1)}$ are the original Cauchy numbers defined by  
$$
\frac{x}{\log(1+x)}=\sum_{n=0}^\infty c_n\frac{x^n}{n!}\,. 
$$ 

Define the {\it polylogarithm function} ${\rm Li}_{2,k}(z)$ {\it with level $2$} by 
\begin{equation}  
{\rm Li}_{k,2}(z)=\sum_{n=0}^\infty\frac{z^{2 n+1}}{(2 n+1)^k}\,. 
\label{def:genpolylog}
\end{equation} 
Then, {\it poly-Bernoulli numbers}  $\mathfrak B_n^{(k)}$ {\it with level $2$} are defined by  
\begin{equation}  
\frac{{\rm Li}_{2,k}\bigl(2\sin(x/2)\bigr)}{2\sin(x/2)}=\sum_{n=0}^\infty\mathfrak B_n^{(k)}\frac{x^n}{n!}\,.   
\label{def:pbel2}  
\end{equation}  
Note that $\mathfrak B_n^{(k)}=0$ for odd $n$. 

The generating function of the poly-Cauchy numbers with level $2$ can be written in the form of iterated integrals (\cite[Theorem 2.1]{KP}): 
\begin{multline*} 
\frac{1}{{\rm arcsinh}x}\underbrace{\int_0^x\frac{1}{{\rm arcsinh}x\sqrt{1+x^2}}\cdots\int_0^x\frac{1}{{\rm arcsinh}x\sqrt{1+x^2}}}_{k-1}\times x\underbrace{d x\cdots d x}_{k-1}\\
=\sum_{n=0}^\infty\mathfrak C_n^{(k)}\frac{x^n}{n!}\quad(k\ge 1)\,. 
\end{multline*} 
We can also write the generating function of the poly-Bernoulli numbers with level $2$ in (\ref{def:pbel2}) in the form of iterated integrals. 

\begin{theorem} 
For $k\ge 1$, we have 
\begin{multline*} 
\frac{1}{2\sin\frac{x}{2}}\underbrace{\int_0^x\frac{1}{2\tan\frac{x}{2}}\cdots\int_0^x\frac{1}{2\tan\frac{x}{2}}}_{k-1}\times \frac{1}{2}\log\frac{1+2\sin\frac{x}{2}}{1-2\sin\frac{x}{2}}\underbrace{d x\cdots d x}_{k-1}\\
=\sum_{n=0}^\infty\mathfrak B_n^{(k)}\frac{x^n}{n!}\,. 
\end{multline*} 
\label{th:iterated}
\end{theorem}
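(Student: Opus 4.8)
The plan is to peel off one level at a time from the definition \eqref{def:pbel2}, exactly as one does for ordinary poly-Bernoulli numbers, using the differential relation between consecutive polylogarithms of level $2$. First I would record the key observation that, writing $u = u(x) := 2\sin(x/2)$, the polylogarithm with level $2$ in \eqref{def:genpolylog} satisfies
\[
\frac{d}{du}{\rm Li}_{2,k}(u)=\frac{{\rm Li}_{2,k-1}(u)}{u}\qquad(k\ge 2),
\]
which is immediate by termwise differentiation of $\sum_{n\ge 0}u^{2n+1}/(2n+1)^k$. For the base case one computes ${\rm Li}_{2,1}(u)=\sum_{n\ge 0}u^{2n+1}/(2n+1)=\frac{1}{2}\log\frac{1+u}{1-u}={\rm arctanh}\,u$, which is exactly the factor $\frac{1}{2}\log\frac{1+2\sin(x/2)}{1-2\sin(x/2)}$ appearing innermost in the claimed iterated integral.

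Next I would convert $\frac{d}{du}$ into $\frac{d}{dx}$ via the chain rule: since $u=2\sin(x/2)$ we have $\frac{du}{dx}=\cos(x/2)$, so for any differentiable $F$,
\[
\frac{d}{dx}F\bigl(u(x)\bigr)=\cos\frac{x}{2}\cdot F'\bigl(u(x)\bigr).
\]
Applying this with $F={\rm Li}_{2,k}$ and using the recursion above gives
\[
\frac{d}{dx}{\rm Li}_{2,k}\bigl(2\sin\tfrac{x}{2}\bigr)=\cos\frac{x}{2}\cdot\frac{{\rm Li}_{2,k-1}(2\sin\frac{x}{2})}{2\sin\frac{x}{2}}=\frac{1}{2\tan\frac{x}{2}}\cdot{\rm Li}_{2,k-1}\bigl(2\sin\tfrac{x}{2}\bigr),
\]
which is precisely the operator $\frac{1}{2\tan(x/2)}\int_0^x(\cdots)\,dx$ that appears $k-1$ times in the statement. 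Since ${\rm Li}_{2,k}(2\sin(x/2))$ vanishes at $x=0$ for every $k$ (the series has no constant term), integrating this identity from $0$ to $x$ is legitimate and lets me reconstruct ${\rm Li}_{2,k}$ from ${\rm Li}_{2,k-1}$. Iterating $k-1$ times starting from ${\rm Li}_{2,1}(2\sin(x/2))=\frac{1}{2}\log\frac{1+2\sin(x/2)}{1-2\sin(x/2)}$ yields
\[
{\rm Li}_{2,k}\bigl(2\sin\tfrac{x}{2}\bigr)=\underbrace{\int_0^x\frac{1}{2\tan\frac{x}{2}}\cdots\int_0^x\frac{1}{2\tan\frac{x}{2}}}_{k-1}\times \frac{1}{2}\log\frac{1+2\sin\frac{x}{2}}{1-2\sin\frac{x}{2}}\underbrace{d x\cdots d x}_{k-1},
\]
and dividing both sides by $2\sin(x/2)$ gives the generating function in \eqref{def:pbel2}, which is the assertion. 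I would organize the iteration as a short induction on $k$, the case $k=1$ being the identity ${\rm Li}_{2,1}(2\sin(x/2))/(2\sin(x/2))=\sum_n\mathfrak B_n^{(1)}x^n/n!$ with no integral operators.

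The main obstacle is essentially bookkeeping rather than a genuine difficulty: one must be careful about the abuse of notation in which the integration variable is also called $x$ (as in the statement and in \cite[Theorem 2.1]{KP}), and one must check that all the manipulations — termwise differentiation, the chain rule, and interchanging $\int_0^x$ with the power series — are justified in a neighbourhood of $x=0$, which they are because $2\sin(x/2)$ is analytic with $|2\sin(x/2)|<1$ near $0$, keeping us inside the disk of convergence of ${\rm Li}_{2,k}$, and because $\frac{1}{2\tan(x/2)}\sim \frac1x$ has only a simple pole while the numerator ${\rm Li}_{2,k-1}(2\sin(x/2))$ has a simple zero at $x=0$, so each integrand is in fact analytic at the origin. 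A clean way to avoid the notational clash is to prove the statement first with a dummy variable $t$ inside the integrals and then rename; I would mention this explicitly. Apart from that, the proof is a direct transcription of Kaneko's iterated-integral derivation of the ordinary poly-Bernoulli generating function, with $1-e^{-x}$ replaced by $2\sin(x/2)$ and $\log(1+x)$-type factors replaced by ${\rm arctanh}$.
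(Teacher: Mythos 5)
Your proposal is correct and is essentially the paper's own argument: both rest on the recursion $\frac{d}{dz}{\rm Li}_{2,k}(z)=\frac{1}{z}{\rm Li}_{2,k-1}(z)$ together with ${\rm Li}_{2,1}(z)=\frac{1}{2}\log\frac{1+z}{1-z}$, iterated $k-1$ times and evaluated at $z=2\sin(x/2)$, where $dz=\cos(x/2)\,dx$ turns $\frac{dz}{z}$ into $\frac{dx}{2\tan(x/2)}$. The only difference is cosmetic (you change variables via the chain rule before integrating, while the paper integrates in $z$ and substitutes at the end), and your extra care about the vanishing of ${\rm Li}_{2,k}(2\sin(x/2))$ at $x=0$ and convergence near the origin only tightens the same proof.
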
  
\begin{proof} 
Since 
$$ 
\frac{d}{d z}{\rm Li}_{2,k}(z)=\frac{1}{z}{\rm Li}_{2,k-1}(z)\,, 
$$ 
we have 
\begin{align*} 
{\rm Li}_{2,k}(z)&=\int_0^z\frac{{\rm Li}_{2,k-1}(z_1)}{z_1}d z_1\\
&=\int_0^z\frac{d z_1}{z_1}\int_0^{z_1}\frac{{\rm Li}_{2,k-2}(z_2)}{z_2}d z_2\\
&=\int_0^z\frac{d z_1}{z_1}\int_0^{z_1}\frac{d z_2}{z_2}\cdots\int_0^{z_{k-2}}\frac{{\rm Li}_{2,1}(z_{k-1})}{z_{k-1}}d z_{k-1}\\ 
&=\int_0^z\frac{d z_1}{z_1}\int_0^{z_1}\frac{d z_2}{z_2}\cdots\int_0^{z_{k-2}}\frac{1}{z_{k-1}}\frac{1}{2}\log\frac{1+z_{k-1}}{1-z_{k-1}}d z_{k-1}\,. 
\end{align*}
Putting $z=z_1=\cdots=z_{k-1}=2\sin(x/2)$, we get 
\begin{multline*} 
\frac{{\rm Li}_{2,k}\bigl(2\sin(x/2)\bigr)}{2\sin(x/2)}=\frac{1}{2\sin(x/2)}\int_0^x\frac{\cos(x/2)}{2\sin(x/2)}d x\int_0^x\frac{\cos(x/2)}{2\sin(x/2)}d x\\
\cdots\int_0^x\frac{\cos(x/2)}{2\sin(x/2)}\frac{1}{2}\log\frac{1+2\sin(x/2)}{1-2\sin(x/2)}d x\,. 
\end{multline*} 
\end{proof}

\section{Explicit formulae and recurrence relations} 

From the definition in (\ref{def:pbel2}), we see that 
\begin{align*}
\mathfrak B_0^{(k)}&=1\,,\\
\mathfrak B_2^{(k)}&=\frac{2}{3^k}\,,\\ 
\mathfrak B_4^{(k)}&=-\frac{2}{3^k}+\frac{24}{5^k}\,,\\ 
\mathfrak B_6^{(k)}&=\frac{2}{3^k}-\frac{120}{5^k}+\frac{720}{7^k}\,,\\ 
\mathfrak B_8^{(k)}&=-\frac{2}{3^k}+\frac{504}{5^k}-\frac{10080}{7^k}+\frac{40320}{9^k}\,,\\ 
\mathfrak B_{10}^{(k)}&=\frac{2}{3^k}-\frac{2040}{5^k}+\frac{105840}{7^k}-\frac{32659200}{9^k}+\frac{3628800}{11^k}\,. 
\end{align*}
In this section, we shall show some explicit formulae and some recurrence relations.  

Poly-Cauchy numbers with level $2$ can be expressed explicitly in terms of the Stirling numbers of the second kind with level $2$ (\cite[Theorem 1]{Ko20}): 
$$
\mathfrak C_{2 n}^{(k)}=\sum_{m=0}^n\sttf2{n}{m}_2\frac{(-4)^{n-m}}{(2 m+1)^k}\,. 
$$ 
Poly-Bernoulli numbers with level $2$ can be expressed explicitly in terms of the Stirling numbers of the second kind with level $2$.  It is a natural extension of the expression in (\ref{exp:pber}).  

\begin{theorem}  
For $n\ge 0$, 
$$ 
\mathfrak B_{2 n}^{(k)}=\sum_{m=0}^n\stss2{n}{m}_2\frac{(-1)^{n-m}(2 m)!}{(2 m+1)^k}\,. 
$$ 
\label{th1}
\end{theorem}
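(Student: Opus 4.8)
The plan is to expand the generating function in (\ref{def:pbel2}) directly, substituting the series for ${\rm Li}_{2,k}$ and extracting coefficients. Write $z=2\sin(x/2)$, so that the left side of (\ref{def:pbel2}) is $\frac{1}{z}{\rm Li}_{2,k}(z)=\sum_{m=0}^\infty\frac{z^{2m}}{(2m+1)^k}$. The key is to understand the expansion of $z^{2m}=\bigl(2\sin(x/2)\bigr)^{2m}$ as a power series in $x$. I would establish the identity
$$
\bigl(2\sin(x/2)\bigr)^{2m}=\sum_{n=m}^\infty\stss2{n}{m}_2\,(-1)^{n-m}\frac{(2m)!}{(2n)!}x^{2n}\,,
$$
which is the level-$2$ analogue of the classical fact that $(e^x-1)^m=m!\sum_{n}\sts{n}{m}x^n/n!$. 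Granting this, summing over $m$ and collecting the coefficient of $x^{2n}/(2n)!$ gives exactly $\mathfrak B_{2n}^{(k)}=\sum_{m=0}^n\stss2{n}{m}_2\frac{(-1)^{n-m}(2m)!}{(2m+1)^k}$, which is the claim (the sum truncates at $m=n$ since $\stss2{n}{m}_2=0$ for $m>n$).

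The main work, therefore, is the power-series identity for $\bigl(2\sin(x/2)\bigr)^{2m}$, and I expect this to be the main obstacle. There are two natural routes. The first is to use the exponential generating function from Theorem \ref{th:200}: taking $s=2$ there gives $\sum_{n\ge k}\stss2{n}{k}_2 x^n/n!=\sum_{j=1}^k\frac{e^{j^2 x}}{\prod_{i\ne j}(j^2-i^2)}+\frac{(-1)^k}{(k!)^2}$. One would then need to convert this into a statement about $\bigl(2\sin(\sqrt{x}/2)\bigr)^{2k}$ by a substitution $x\mapsto$ (something trigonometric); the appearance of $j^2$ in the exponents is what produces the $\sin^2$ structure after the substitution $e^{ix}$. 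Concretely, using $2\sin(x/2)=-i(e^{ix/2}-e^{-ix/2})$ and a binomial expansion, $\bigl(2\sin(x/2)\bigr)^{2m}=(-1)^m\sum_{\ell}\binom{2m}{\ell}(-1)^\ell e^{i(m-\ell)x}$, and comparing this with the explicit formula (\ref{ex:32}) for $\stss2{n}{m}_2=\frac{2}{(2m)!}\sum_{j=1}^m(-1)^{m-j}\binom{2m}{m-j}j^{2n}$ lets one match coefficients of $x^{2n}$ term by term; this is probably the cleanest path.

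Alternatively, and perhaps most in the spirit of the paper, I would derive the identity from the defining relation (\ref{def:stss}) with $s=2$, namely $x^n=\sum_k\stss2{n}{k}_2\,x(x-1^2)(x-2^2)\cdots(x-(k-1)^2)$, together with the known generating-function interpretation. One can invert: the family of polynomials $\prod_{i=0}^{k-1}(x-i^2)$ is related to $(2\sin)^{2k}$ via the central-factorial / umbral correspondence $x\leftrightarrow$ ``$2\sin(\partial/2)$ applied to something,'' but making this rigorous requires care, so I lean toward the binomial-expansion comparison above. Either way, once the $\bigl(2\sin(x/2)\bigr)^{2m}$ expansion is in hand, the rest is a one-line coefficient extraction from (\ref{def:pbel2}), exactly paralleling Kaneko's derivation of (\ref{exp:pber}) from (\ref{def:pber}). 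As a sanity check I would verify the formula against the tabulated values of $\mathfrak B_2^{(k)},\dots,\mathfrak B_8^{(k)}$ listed just before the theorem, using the small values $\stss2{1}{1}_2=1$, $\stss2{2}{1}_2=1$, $\stss2{2}{2}_2=1$, $\stss2{3}{2}_2=5$, etc.
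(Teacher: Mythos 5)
Your proposal is correct and takes essentially the same route as the paper: the paper's proof likewise reduces the theorem to the expansion $\bigl(2\sin(x/2)\bigr)^{2m}=\sum_{n\ge m}(-1)^{n-m}\frac{(2m)!}{(2n)!}\stss2{n}{m}_2\,x^{2n}$ and then extracts the coefficient of $x^{2n}/(2n)!$ from (\ref{def:pbel2}). The only difference is that the paper cites this expansion from \cite[Theorem 4.1.1]{BSSV}, whereas you derive it yourself from (\ref{ex:32}) via the binomial expansion of $\bigl(e^{ix/2}-e^{-ix/2}\bigr)^{2m}$, which is a valid, self-contained way to supply the cited lemma.
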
 
\begin{proof}  
We use the power series of powers of trigonometric functions 
$$
\left(2\sin\frac{x}{2}\right)^{2 m}=\sum_{n=m}^\infty(-1)^{n-m}\frac{(2 m)!}{(2 n)!}\stss2{n}{m}_2 x^{2 n} 
$$ 
(see \cite[Theorem 4.1.1 (4.1.1)]{BSSV}). Then, by (\ref{def:genpolylog}) and (\ref{def:pbel2}), we have 
\begin{align*}  
\sum_{n=0}^\infty\mathfrak B_n^{(k)}\frac{x^n}{n!}&=\sum_{n=0}^\infty\mathfrak B_{2 n}^{(k)}\frac{x^{2 n}}{(2 n)!}\\ 
&=\sum_{m=0}^\infty\frac{\bigl(2\sin(x/2)\bigr)^{2 m}}{(2 m+1)^k}\\
&=\sum_{m=0}^\infty\frac{1}{(2 m+1)^k}\sum_{n=m}^\infty(-1)^{n-m}\frac{(2 m)!}{(2 n)!}\stss2{n}{m}_2 x^{2 n}\\  
&=\sum_{n=0}^\infty\sum_{m=0}^n\stss2{n}{m}_2\frac{(-1)^{n-m}(2 m)!}{(2 m+1)^k}\frac{x^{2 n}}{(2 n)!}\,. 
\end{align*}
Comparing the coefficients on both sides, we get the desired result.  
\end{proof}

Next, we shall show an explicit formula without Stirling numbers. 

\begin{theorem}  
For integers $n$ and $k$ with $n\ge 0$, 
$$ 
\mathfrak B_{2 n}^{(k)}=\sum_{m=0}^n\frac{1}{(2 m+1)^k}\sum_{i_1+\cdots+i_{2 m}=n-m\atop i_1,\dots,i_{2 m}\ge 0}\left(-\frac{1}{4}\right)^{n-m}\binom{2 n}{2 i_1+1,\cdots,2 i_{2 m}+1}\,, 
$$ 
where 
$$
\binom{2 n}{2 i_1+1,\cdots,2 i_{2 m}+1}=\frac{\bigl((2 i_1+1)+\cdots+(2 i_{2 m}+1)\bigr)!}{(2 i_1+1)!\cdots(2 i_{2 m}+1)!}
$$ 
is the multinomial coefficient. 
\label{th:exp}
\end{theorem}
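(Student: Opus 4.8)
The plan is to start from the generating-function definition \eqref{def:pbel2} and expand $2\sin(x/2)$ directly as a power series, rather than routing through the Stirling numbers of the second kind with level $2$ as in Theorem \ref{th1}. First I would write
$$
2\sin\frac{x}{2}=\sum_{i=0}^\infty\frac{(-1)^i}{(2i+1)!}\left(\frac{x}{2}\right)^{2i+1}\cdot 2=\sum_{i=0}^\infty\frac{(-1)^i}{4^i(2i+1)!}x^{2i+1}\,,
$$
so that for each fixed $m\ge 0$ the quantity $\bigl(2\sin(x/2)\bigr)^{2m}$ is an $2m$-fold Cauchy product of this series. Multiplying out gives
$$
\left(2\sin\frac{x}{2}\right)^{2m}=\sum_{i_1,\dots,i_{2m}\ge 0}\frac{(-1)^{i_1+\cdots+i_{2m}}}{4^{i_1+\cdots+i_{2m}}(2i_1+1)!\cdots(2i_{2m}+1)!}\,x^{(2i_1+1)+\cdots+(2i_{2m}+1)}\,.
$$
The exponent of $x$ is $2(i_1+\cdots+i_{2m})+2m$, which is even; setting $i_1+\cdots+i_{2m}=n-m$ forces the $x$-power to be $x^{2n}$, and then $(-1)^{i_1+\cdots+i_{2m}}=(-1)^{n-m}$ and $4^{i_1+\cdots+i_{2m}}=4^{n-m}$ come out of the sum.

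The second step is to reorganize by the total degree. Collecting the coefficient of $x^{2n}$ in $\bigl(2\sin(x/2)\bigr)^{2m}$ and multiplying by $(2n)!$ to convert to the exponential-generating-function normalization produces
$$
\frac{\bigl((2i_1+1)+\cdots+(2i_{2m}+1)\bigr)!}{(2i_1+1)!\cdots(2i_{2m}+1)!}=\binom{2n}{2i_1+1,\dots,2i_{2m}+1}
$$
since $(2i_1+1)+\cdots+(2i_{2m}+1)=2(n-m)+2m=2n$. Then I substitute into \eqref{def:pbel2}: summing $\bigl(2\sin(x/2)\bigr)^{2m}/(2m+1)^k$ over $m\ge 0$, interchanging the $m$-sum with the sum over the $i_j$'s (all terms nonnegative, so no convergence subtlety in the formal-power-series sense), and matching the coefficient of $x^{2n}/(2n)!$ against $\sum_n \mathfrak B_{2n}^{(k)}x^{2n}/(2n)!$. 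This yields exactly the claimed formula, with the inner sum ranging over compositions $i_1+\cdots+i_{2m}=n-m$ with $i_j\ge 0$.

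The only genuinely delicate point is bookkeeping: one must be careful that when $m=0$ the factor $\bigl(2\sin(x/2)\bigr)^{0}=1$ contributes only to $n=0$ (the empty composition), matching $\mathfrak B_0^{(k)}=1$, and that the multinomial-coefficient rewriting is legitimate — i.e.\ that the parts $2i_j+1$ are genuinely the exponents appearing in the Cauchy product, so their sum is the top index $2n$ and the combinatorial identity $\binom{2n}{2i_1+1,\dots,2i_{2m}+1}=\frac{(2n)!}{\prod(2i_j+1)!}$ applies verbatim. I expect this to be routine rather than an obstacle; the proof is essentially a direct expansion, and indeed one could alternatively derive it from Theorem \ref{th1} by substituting the known combinatorial expansion of $\stss2{n}{m}_2$, but the self-contained route above is cleaner.

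\begin{proof}[Proof of Theorem \ref{th:exp}]
Write $2\sin(x/2)=\sum_{i\ge 0}\frac{(-1)^i}{4^i(2i+1)!}x^{2i+1}$. Raising to the $2m$-th power by the Cauchy product,
$$
\left(2\sin\frac{x}{2}\right)^{2 m}=\sum_{i_1,\dots,i_{2m}\ge 0}\frac{(-1)^{i_1+\cdots+i_{2m}}}{4^{\,i_1+\cdots+i_{2m}}\,(2i_1+1)!\cdots(2i_{2m}+1)!}\,x^{\,2(i_1+\cdots+i_{2m})+2m}\,.
$$
Grouping the terms with $i_1+\cdots+i_{2m}=n-m$ gives the coefficient of $x^{2n}$, and since $(2i_1+1)+\cdots+(2i_{2m}+1)=2n$,
$$
\left(2\sin\frac{x}{2}\right)^{2 m}=\sum_{n\ge m}\left(-\frac14\right)^{n-m}\frac{1}{(2n)!}\left(\sum_{i_1+\cdots+i_{2m}=n-m\atop i_1,\dots,i_{2m}\ge 0}\binom{2n}{2i_1+1,\dots,2i_{2m}+1}\right)x^{2n}\,,
$$
where we used $\binom{2n}{2i_1+1,\dots,2i_{2m}+1}=\frac{(2n)!}{(2i_1+1)!\cdots(2i_{2m}+1)!}$. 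By \eqref{def:genpolylog} and \eqref{def:pbel2},
$$
\sum_{n=0}^\infty\mathfrak B_{2n}^{(k)}\frac{x^{2n}}{(2n)!}=\sum_{m=0}^\infty\frac{\bigl(2\sin(x/2)\bigr)^{2m}}{(2m+1)^k}=\sum_{n=0}^\infty\left(\sum_{m=0}^n\frac{1}{(2m+1)^k}\sum_{i_1+\cdots+i_{2m}=n-m\atop i_1,\dots,i_{2m}\ge 0}\left(-\frac14\right)^{n-m}\binom{2n}{2i_1+1,\dots,2i_{2m}+1}\right)\frac{x^{2n}}{(2n)!}\,,
$$
and comparing coefficients of $x^{2n}/(2n)!$ yields the claim.
\end{proof}
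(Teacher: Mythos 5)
Your proposal is correct and follows essentially the same route as the paper's own proof: expand $2\sin(x/2)$ as its power series, raise to the $2m$-th power via the Cauchy/multinomial product, substitute into the definition (\ref{def:pbel2}), and compare coefficients of $x^{2n}/(2n)!$. The only cosmetic difference is that you make the multinomial coefficient $\binom{2n}{2i_1+1,\dots,2i_{2m}+1}$ explicit before the coefficient comparison, whereas the paper leaves the factorials in the denominator until the final step.
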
  
\begin{proof}  
Since 
$$
2\sin\frac{x}{2}=\sum_{\ell=0}^\infty\frac{(-1)^\ell}{(2\ell+1)!}\frac{x^{2\ell+1}}{2^{2\ell}}\,, 
$$ 
we have 
\begin{align*}  
&\sum_{n=0}^\infty\mathfrak B_{2 n}^{(k)}\frac{x^{2 n}}{(2 n)!}\\
&=\sum_{m=0}^\infty\frac{1}{(2 m+1)^k}\left(2\sin\frac{x}{2}\right)^{2 m}\\ 
&=\sum_{m=0}^\infty\frac{1}{(2 m+1)^k}\left(\sum_{\ell=0}^\infty\frac{(-1)^\ell}{(2\ell+1)!}\frac{x^{2\ell+1}}{2^{2\ell}}\right)^{2 m}\\ 
&=\sum_{m=0}^\infty\frac{1}{(2 m+1)^k}\sum_{n=m}^\infty\sum_{i_1+\cdots+i_{2 m}=n-m\atop i_1,\dots,i_{2 m}\ge 0}\frac{(-1)^{i_1+\cdots+i_{2 m}}}{(2 i_1+1)!\cdots(2 i_{2 m}+1)!}\frac{x^{2 n}}{2^{2 i_1+\cdots+2 i_{2 m}}}\\
&=\sum_{n=0}^\infty\sum_{m=0}^n\frac{1}{(2 m+1)^k}\sum_{i_1+\cdots+i_{2 m}=n-m\atop i_1,\dots,i_{2 m}\ge 0}\frac{(-1)^{n-m}x^{2 n}}{(2 i_1+1)!\cdots(2 i_{2 m}+1)!2^{2(n-m)}}\,.
\end{align*}
Comparing the coefficients on both sides, we get the desired result.  
\end{proof}

There exists a recurrence formula for $\mathfrak B_n^{(k)}$ in terms of $\mathfrak B_n^{(k-1)}$ and the original Bernoulli numbers $B_n$ in (\ref{def:ber}). In fact, $B_{n}=B_{n}^{(1)}$ for even $n$.  

\begin{theorem}  
For integers $n$ and $k$ with $n\ge 0$ and $k\ge 1$,  
$$
\mathfrak B_{2 n}^{(k-1)}=\mathfrak B_{2 n}^{(k)}+(2 n)!\sum_{m=0}^{n-1}\frac{4\bigl((-1)^{n-m}-(-4)^{n-m}\bigr)B_{2 n-2 m}\mathfrak B_{2 m+2}^{(k)}}{(2 n-2 m)!(2 m+1)!}\,. 
$$ 
\label{th:relation-bbb} 
\end{theorem}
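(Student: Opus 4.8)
The plan is to convert the defining identity~(\ref{def:pbel2}) into a functional equation linking the generating functions at levels $k$ and $k-1$, and then to extract the recurrence by comparing Taylor coefficients. First I would write $F_k(z):={\rm Li}_{2,k}(z)/z$, so that the left-hand side of~(\ref{def:pbel2}) is $F_k\bigl(2\sin(x/2)\bigr)$ and ${\rm Li}_{2,k}(z)=z\,F_k(z)$. Differentiating the latter in $z$ and invoking $\frac{d}{dz}{\rm Li}_{2,k}(z)=\frac1z{\rm Li}_{2,k-1}(z)$ (established in the proof of Theorem~\ref{th:iterated}), whose right-hand side is exactly $F_{k-1}(z)$, I obtain the clean recursion $F_{k-1}(z)=F_k(z)+z\,F_k'(z)$ at the level of the ``raw'' functions of $z$.

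Next I would substitute $z=2\sin(x/2)$. Setting $g_k(x):=\sum_{n\ge0}\mathfrak B_n^{(k)}x^n/n!=F_k\bigl(2\sin(x/2)\bigr)$ and using $\frac{d}{dx}\,2\sin(x/2)=\cos(x/2)$ with the chain rule, the previous display becomes
\[
g_{k-1}(x)=g_k(x)+2\tan\tfrac x2\cdot g_k'(x)\,.
\]
To finish I need the power series of $2\tan(x/2)$ with coefficients written through the Bernoulli numbers of~(\ref{def:ber}). Starting from $\tan y=\cot y-2\cot 2y$ with $y=x/2$ and the classical expansion $x\cot x=\sum_{n\ge0}\frac{(-1)^n4^nB_{2n}}{(2n)!}x^{2n}$ (equivalent to~(\ref{def:ber}) after symmetrizing, recalling that $B_1=-1/2$ so $B_{2n}=B_{2n}^{(1)}$), I expect to get
\[
2\tan\tfrac x2=\sum_{n\ge1}\frac{4\bigl((-1)^n-(-4)^n\bigr)B_{2n}}{(2n)!}\,x^{2n-1}\,,
\]
in which the putative $x^{-1}$-term cancels. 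Since $\mathfrak B_n^{(k)}=0$ for odd $n$, I also have $g_k'(x)=\sum_{m\ge0}\mathfrak B_{2m+2}^{(k)}x^{2m+1}/(2m+1)!$.

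Finally I would multiply these two series and read off the coefficient of $x^{2n}$: the exponent constraint forces $p+m=n$ with $p\ge1$, so the Bernoulli index $2(n-m)$ runs over $m=0,\dots,n-1$, producing precisely the sum on the right of the claimed identity once I compare with the coefficient $(\mathfrak B_{2n}^{(k-1)}-\mathfrak B_{2n}^{(k)})/(2n)!$ of $x^{2n}$ in $g_{k-1}-g_k$ and clear the factor $1/(2n)!$. The only step I expect to require care is the tangent expansion---pinning down the normalization and signs so the coefficient comes out as $4\bigl((-1)^n-(-4)^n\bigr)B_{2n}/(2n)!$, consistent with the convention $B_1=-1/2$, and checking that the $n=0$ contribution drops so that $2\tan(x/2)$ is regular at the origin; the rest is routine Cauchy-product bookkeeping.
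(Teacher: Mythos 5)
Your proposal is correct and follows essentially the same route as the paper: the paper also differentiates the defining identity (equivalently, uses $\frac{d}{dz}{\rm Li}_{2,k}(z)=\frac{1}{z}{\rm Li}_{2,k-1}(z)$) to arrive at the functional equation $\sum_n\mathfrak B_{2n}^{(k-1)}\frac{x^{2n}}{(2n)!}=\sum_n\mathfrak B_{2n}^{(k)}\frac{x^{2n}}{(2n)!}+2\tan\frac{x}{2}\sum_n\mathfrak B_{2n+2}^{(k)}\frac{x^{2n+1}}{(2n+1)!}$, and then expands $2\tan\frac{x}{2}$ with the Bernoulli coefficients $\frac{4\bigl((-1)^n-(-4)^n\bigr)B_{2n}}{(2n)!}$ and compares coefficients of $x^{2n}$. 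Differentiating in $z$ first and substituting $z=2\sin(x/2)$ afterwards, as you do, is only a cosmetic variant of the paper's differentiation in $x$ followed by division by $\cos(x/2)$, and your tangent expansion and Cauchy-product bookkeeping check out.
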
  
\begin{proof}  
From the definition in (\ref{def:pbel2}), we see  
$$
{\rm Li}_{2,k}\bigl(2\sin(x/2)\bigr)=2\sin(x/2)\sum_{n=0}^\infty\mathfrak B_{2 n}^{(k)}\frac{x^{2 n}}{(2 n)!}\,.   
$$ 
Differentiating both sides by $x$, we have 
\begin{align*}  
&\frac{\cos(x/2)}{2\sin(x/2)}{\rm Li}_{2,k-1}\bigl(2\sin(x/2)\bigr)\\
&=\cos\frac{x}{2}\sum_{n=0}^\infty\mathfrak B_{2 n}^{(k)}\frac{x^{2 n}}{(2 n)!}+2\sin\frac{x}{2}\sum_{n=1}^\infty\mathfrak B_{2 n}^{(k)}\frac{x^{2 n-1}}{(2 n-1)!}\,. 
\end{align*}
Hence, 
\begin{equation}
\sum_{n=0}^\infty\mathfrak B_{2 n}^{(k-1)}\frac{x^{2 n}}{(2 n)!}=\sum_{n=0}^\infty\mathfrak B_{2 n}^{(k)}\frac{x^{2 n}}{(2 n)!}+2\tan\frac{x}{2}\sum_{n=0}^\infty\mathfrak B_{2 n+2}^{(k)}\frac{x^{2 n+1}}{(2 n+1)!}\,. 
\label{eq:200} 
\end{equation} 
Since 
\begin{align*} 
&2\tan\frac{x}{2}\sum_{n=0}^\infty\mathfrak B_{2 n+2}^{(k)}\frac{x^{2 n+1}}{(2 n+1)!}\\
&=\left(\sum_{n=1}^\infty\frac{4\bigl((-1)^n-(-4)^n\bigr)B_{2 n}}{(2 n)!}x^{2 n-1}\right)\left(\sum_{n=0}^\infty\mathfrak B_{2 n+2}^{(k)}\frac{x^{2 n+1}}{(2 n+1)!}\right)\\ 
&=\sum_{n=1}^\infty\sum_{m=0}^{n-1}\frac{4\bigl((-1)^{n-m}-(-4)^{n-m}\bigr)B_{2 n-2 m}\mathfrak B_{2 m+2}^{(k)}}{(2 n-2 m)!(2 m+1)!}x^{2 n}\,, 
\end{align*}
comparing the coefficients of both sides of (\ref{eq:200}), we get the desired result.  
\end{proof}

\section{Relations with poly-Cauchy numbers with level $2$}  

There exist some strong reasons why we define poly-Bernoulli numbers with level $2$ as in (\ref{def:pbel2}). Poly-Cauchy numbers with level $2$ can be expressed in terms of poly-Bernoulli numbers with level $2$.  

\begin{theorem} 
For integers $n$ and $k$ with $n\ge 1$, 
$$
\mathfrak C_{2 n}^{(k)}=\sum_{m=1}^n\sum_{l=1}^m\frac{(-4)^{n-m}}{(2 m)!}\sttf2{n}{m}_2\sttf2{m}{l}_2\mathfrak B_{2 l}^{(k)}\,. 
$$ 
\label{th:pc2-pb2}
\end{theorem}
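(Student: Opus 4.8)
The plan is to route through the ``one-index'' identity that rewrites $1/(2m+1)^k$ in terms of the numbers $\mathfrak B_{2l}^{(k)}$, and then substitute it into the explicit formula $\mathfrak C_{2n}^{(k)}=\sum_{m=0}^n\sttf2{n}{m}_2(-4)^{n-m}/(2m+1)^k$ taken from \cite{Ko20}. To produce that one-index identity I would start from the generating function (\ref{def:pbel2}), written as
$$\sum_{m=0}^\infty\frac{\bigl(2\sin(x/2)\bigr)^{2m}}{(2m+1)^k}=\sum_{l=0}^\infty\mathfrak B_{2l}^{(k)}\frac{x^{2l}}{(2l)!}\,,$$
and perform the formal substitution $x\mapsto 2\arcsin(x/2)$ (legitimate since $2\arcsin(x/2)=x+O(x^3)$ has no constant term). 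On the left this replaces $2\sin(x/2)$ by $x$, giving $\sum_m x^{2m}/(2m+1)^k$, while the right becomes $\sum_l\mathfrak B_{2l}^{(k)}\bigl(2\arcsin(x/2)\bigr)^{2l}/(2l)!$.

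The second ingredient is the first-kind companion of the power-series identity used in the proof of Theorem~\ref{th1}, namely
$$\left(2\arcsin\frac{x}{2}\right)^{2l}=(2l)!\sum_{n=l}^\infty\sttf2{n}{l}_2\frac{x^{2n}}{(2n)!}\,.$$
This is exactly the expansion underlying the poly-Cauchy formula quoted from \cite{Ko20}: it follows from $\bigl({\rm arcsinh}\,x\bigr)^{2l}=(2l)!\sum_{n\ge l}\sttf2{n}{l}_2(-4)^{n-l}x^{2n}/(2n)!$ by the substitution $x\mapsto ix/2$, together with ${\rm arcsinh}(iu)=i\arcsin u$. Inserting it and comparing the coefficients of $x^{2n}/(2n)!$ on the two sides yields the inversion identity $\sum_{l=0}^n\sttf2{n}{l}_2\mathfrak B_{2l}^{(k)}=(2n)!/(2n+1)^k$, equivalently
$$\frac{1}{(2m+1)^k}=\frac{1}{(2m)!}\sum_{l=0}^m\sttf2{m}{l}_2\mathfrak B_{2l}^{(k)}\,.$$

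Finally I would substitute this into $\mathfrak C_{2n}^{(k)}=\sum_{m=0}^n\sttf2{n}{m}_2(-4)^{n-m}/(2m+1)^k$, obtaining $\mathfrak C_{2n}^{(k)}=\sum_{m=0}^n\sum_{l=0}^m\frac{(-4)^{n-m}}{(2m)!}\sttf2{n}{m}_2\sttf2{m}{l}_2\mathfrak B_{2l}^{(k)}$; since $\sttf2{n}{0}_2=0$ for $n\ge1$ and $\sttf2{m}{0}_2=0$ for $m\ge1$, the terms with $m=0$ or $l=0$ drop out, so both sums start at $1$, which is the asserted formula. The step I expect to be the main obstacle is pinning down this first-kind level-$2$ power-series identity with the correct normalization and sign --- in particular verifying that it is $\arcsin$ (trigonometric) rather than ${\rm arcsinh}$ that appears, so that the factor $1/(2m)!$ from the substitution exactly cancels the $(2m)!$ that the identity produces. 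As a fully algebraic alternative one can skip generating functions: plug in the explicit formulas from Theorem~\ref{th1} and \cite{Ko20} directly, interchange the order of summation, and collapse the inner sum using the orthogonality relation $\sum_l(-1)^{m-l}\sttf2{m}{l}_2\stss2{l}{p}_2=\delta_{mp}$ for the level-$2$ Stirling numbers, which itself follows by substituting $x\mapsto-x$ into the two polynomial identities $x(x+1^2)\cdots(x+(m-1)^2)=\sum_l\sttf2{m}{l}_2 x^l$ and $x^m=\sum_p\stss2{m}{p}_2\,x(x-1^2)\cdots(x-(p-1)^2)$ from the introduction.
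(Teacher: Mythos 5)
Your main route is correct, and it is genuinely different from the paper's. The paper proves the identity by substituting the explicit formula of Theorem \ref{th1} for $\mathfrak B_{2l}^{(k)}$ into the right-hand side and collapsing the inner sum with the orthogonality relation $\sum_{l=j}^m(-1)^{l-j}\sttf2{m}{l}_2\stss2{l}{j}_2=\delta_{mj}$ cited from \cite{KRV2}, which leaves exactly $\sum_{m=1}^n\sttf2{n}{m}_2(-4)^{n-m}/(2m+1)^k=\mathfrak C_{2n}^{(k)}$; that is precisely the ``fully algebraic alternative'' you sketch at the end. Your primary argument instead establishes the inversion identity $\sum_{l=0}^m\sttf2{m}{l}_2\mathfrak B_{2l}^{(k)}=(2m)!/(2m+1)^k$ via the formal substitution $x\mapsto 2\arcsin(x/2)$ in (\ref{def:pbel2}) together with the expansion $\bigl(2\arcsin(x/2)\bigr)^{2l}=(2l)!\sum_{n\ge l}\sttf2{n}{l}_2\,x^{2n}/(2n)!$, and then feeds it into the formula $\mathfrak C_{2n}^{(k)}=\sum_{m}\sttf2{n}{m}_2(-4)^{n-m}/(2m+1)^k$ from \cite{Ko20}; your sign bookkeeping is right (the unsigned level-$2$ Stirling numbers of the first kind go with $\arcsin$, the factors $(-4)^{n-m}$ with ${\rm arcsinh}$, as the substitution $x\mapsto ix/2$ shows), and the vanishing of $\sttf2{n}{0}_2$ and $\sttf2{m}{0}_2$ correctly disposes of the $m=0$ and $l=0$ terms. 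Notably, the identity you prove along the way is exactly (\ref{b-sum}) of Theorem \ref{th:330}, which the paper derives afterwards from Theorem \ref{th1} and orthogonality; so your route avoids the orthogonality relation altogether and gives an independent generating-function proof of (\ref{b-sum}), at the price of needing the first-kind power-series expansion, whereas the paper's route is shorter once Theorem \ref{th1} and the cited relation are in hand. One small caveat on your backup route: substituting $x\mapsto-x$ into the two polynomial identities directly yields the orthogonality with $\stss2{m}{l}_2$ outermost (the version used in Theorem \ref{th:pb2-pc2}); the version needed here, with $\sttf2{m}{l}_2$ outermost, then follows because the two triangular arrays are two-sided matrix inverses of each other, or one can simply quote \cite{KRV2} as the paper does.
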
  

\noindent 
{\it Remark.}  
Poly-Cauchy numbers can be expressed in terms of poly-Bernoulli numbers (\cite[Theorem 2.2]{KL}):  
$$
c_{n}^{(k)}=\sum_{m=1}^n\sum_{l=1}^m\frac{(-1)^{n-m}}{m!}\stif{n}{m}\stif{m}{l}B_{l}^{(k)}\,.
$$ 

\begin{proof}[Proof of Theorem \ref{th:pc2-pb2}] 
By Theorem \ref{th1} and the orthogonal relation 
$$
\sum_{l=j}^m(-1)^{l-j}\sttf2{m}{l}_s\stss2{l}{j}_s=
\begin{cases} 
1&\text{$m=j$};\\
0&\text{$m\ne j$}
\end{cases}\quad(s\ge 1)
$$ 
(\cite[Theorem 5.1]{KRV2}) together with $\sttf2{n}{0}_2=0$ ($n\ge 1$), we have 
\begin{align*}  
{\rm RHS}&=\sum_{m=1}^n\sum_{l=1}^m\frac{(-4)^{n-m}}{(2 m)!}\sttf2{n}{m}_2\sttf2{m}{l}_2\sum_{j=0}^l\stss2{l}{j}_2\frac{(-1)^{l-j}(2 j)!}{(2 j+1)^k}\\
&=\sum_{m=1}^n\frac{(-4)^{n-m}}{(2 m)!}\sttf2{n}{m}_2\sum_{j=0}^m\frac{(2 j)!}{(2 j+1)^k}\sum_{l=j}^m(-1)^{l-j}\sttf2{m}{l}_2\stss2{l}{j}_2\\ 
&=\sum_{m=1}^n\frac{(-4)^{n-m}}{(2 m)!}\sttf2{n}{m}_2\frac{(2 m)!}{(2 m+1)^k}\\
&=\sum_{m=1}^n\frac{(-4)^{n-m}}{(2 m+1)^k}\sttf2{n}{m}_2\\
&=\mathfrak C_{2 n}^{(k)}\quad(\cite[{\rm Theorem~1}]{Ko20})\,. 
\end{align*}
\end{proof}

On the contrary, poly-Bernoulli numbers can be expressed in terms of poly-Cauchy numbers (\cite{Ko1,KL}):
$$ 
B_{n}^{(k)}=\sum_{m=1}^n\sum_{l=1}^m(-1)^{n-m}m!\sts{n}{m}\sts{m}{l}c_{l}^{(k)}\,.
$$ 
Similarly, poly-Bernoulli numbers with level $2$ can be expressed in terms of poly-Cauchy numbers with level $2$. 

\begin{theorem} 
For integers $n$ and $k$ with $n\ge 1$, 
$$
\mathfrak B_{2 n}^{(k)}=\sum_{m=1}^n\sum_{l=1}^m(-1)^{n-m}4^{m-l}(2 m)!\stss2{n}{m}_2\stss2{m}{l}_2\mathfrak C_{2 l}^{(k)}\,. 
$$ 
\label{th:pb2-pc2}
\end{theorem}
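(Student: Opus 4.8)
The plan is to mirror exactly the proof of Theorem \ref{th:pc2-pb2}, but with the roles of the two families of level-$2$ Stirling numbers interchanged. The starting point is Theorem \ref{th:pc2-pb2} itself, which expresses $\mathfrak C_{2l}^{(k)}$ as a double sum over level-$2$ Stirling numbers of both kinds applied to $\mathfrak B_{2\cdot}^{(k)}$. Substituting that expression for $\mathfrak C_{2l}^{(k)}$ into the right-hand side of the claimed identity, I would then collect the sums and invoke an orthogonality relation to collapse them.

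The key steps, in order, are as follows. First, write the right-hand side and insert $\mathfrak C_{2l}^{(k)}=\sum_{p=1}^l\sum_{q=1}^p\frac{(-4)^{l-p}}{(2p)!}\sttf2{l}{p}_2\sttf2{p}{q}_2\mathfrak B_{2q}^{(k)}$ from Theorem \ref{th:pc2-pb2}. Second, reorganize the resulting quadruple sum so that the inner two summations range over $l$ and carry the factor $\stss2{m}{l}_2\sttf2{l}{p}_2$; here the prefactors $4^{m-l}$ and $(-4)^{l-p}$ combine to the right power of $4$ (and sign) in terms of $m$ and $p$, and the relevant portion of the orthogonality should be the dual one, namely $\sum_{l}(-1)^{?}\stss2{m}{l}_s\sttf2{l}{p}_s$ equal to $1$ if $m=p$ and $0$ otherwise (the companion of the relation cited from \cite[Theorem 5.1]{KRV2}, or that same relation read with the two kinds swapped). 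One must be careful to track how the $4$-powers interact with the signs so that the surviving orthogonality is exactly of the stated form; this bookkeeping is the only delicate point. Third, after the $l$-sum collapses to $m=p$, one is left with $\sum_{m=1}^n\sum_{q=1}^m (-1)^{n-m}4^{m-q}(2m)!\,\stss2{n}{m}_2 \cdot \frac{1}{(2m)!}\sttf2{m}{q}_2\,\mathfrak B_{2q}^{(k)}$ — but this does not immediately look like $\mathfrak B_{2n}^{(k)}$, so a second orthogonality (now summing $\stss2{n}{m}_2\sttf2{m}{q}_2$ over $m$) must be applied to reduce to $n=q$, leaving just $\mathfrak B_{2n}^{(k)}$. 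Again, reconciling the powers of $4$ against the signs is what makes the two orthogonalities line up.

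Alternatively — and this is cleaner — I would prove it directly by a generating-function argument parallel to the proof of Theorem \ref{th1}, using the inversion $x=2\sin(\theta/2)$ versus $x={\rm arcsinh}\,t$ and the two power-series identities for $\bigl(2\sin(x/2)\bigr)^{2m}$ and $\bigl({\rm arcsinh}\,x\bigr)^{2m}$ in terms of central factorial numbers of the first and second kind (cf. \cite{BSSV}). The point is that $\mathfrak C_{2n}^{(k)}=\sum_m \sttf2{n}{m}_2\frac{(-4)^{n-m}}{(2m+1)^k}$ and $\mathfrak B_{2n}^{(k)}=\sum_m \stss2{n}{m}_2\frac{(-1)^{n-m}(2m)!}{(2m+1)^k}$ are related by precisely the change of basis between $x^{2n}$-type and falling-factorial-type bases, so composing the two transition matrices (which are mutually inverse up to the tracked scalars $(-4)^{\bullet}$, $(2m)!$, and $4^{\bullet}$) yields the identity.

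The main obstacle I expect is getting the scalar factors consistent: the $(-4)^{n-m}$ in the poly-Cauchy formula, the $(-1)^{n-m}(2m)!$ in Theorem \ref{th1}, and the $(-1)^{n-m}4^{m-l}(2m)!$ claimed here must be distributed so that the orthogonality relations from \cite[Theorem 5.1]{KRV2} apply verbatim; this amounts to checking that $4^{m-l}(-4)^{l-p}=(-4)^{m-p}\cdot(-1)^{m-l}$ and similar bookkeeping identities, plus confirming that the second orthogonality needed (summing over the middle index $m$ with $\stss2{n}{m}_2\sttf2{m}{q}_2$) is indeed available with the correct sign convention. Once the powers of $4$ and the signs are matched, the two nested orthogonalities collapse everything and the result drops out; no genuinely new idea beyond what is in the proofs of Theorems \ref{th1} and \ref{th:pc2-pb2} is required.
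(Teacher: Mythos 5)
Your argument is correct, but it takes a different (though closely parallel) route from the paper. The paper's proof substitutes the explicit expansion $\mathfrak C_{2 l}^{(k)}=\sum_{j=1}^l\sttf2{l}{j}_2\frac{(-4)^{l-j}}{(2 j+1)^k}$ from \cite{Ko20} into the right-hand side, applies the single orthogonality $\sum_{l=j}^m(-1)^{l-j}\stss2{m}{l}_2\sttf2{l}{j}_2=\delta_{m,j}$ from \cite[Theorem 5.1]{KRV2}, and then recognizes the surviving sum $\sum_{m}\stss2{n}{m}_2\frac{(-1)^{n-m}(2 m)!}{(2 m+1)^k}$ as $\mathfrak B_{2 n}^{(k)}$ via Theorem \ref{th1}. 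You instead insert the already-proved companion identity (Theorem \ref{th:pc2-pb2}) for $\mathfrak C_{2 l}^{(k)}$ and collapse the resulting quadruple sum with that same orthogonality used twice: once in $l$ (forcing $m=p$) and once in $m$ (forcing $n=q$, after writing $(-1)^{n-m}=(-1)^{n-q}(-1)^{m-q}$). This is valid and not circular, since Theorem \ref{th:pc2-pb2} is established independently; what your route buys is an explicit ``mutually inverse transformations'' reading of the two theorems, at the cost of one extra orthogonality step, while the paper's route is shorter and lands directly on the formula of Theorem \ref{th1}. One bookkeeping slip to fix: after the $l$-sum collapses at $m=p$, the power of four is $4^{m-p}=1$, so the intermediate expression is $\sum_{m,q}(-1)^{n-m}\stss2{n}{m}_2\sttf2{m}{q}_2\mathfrak B_{2 q}^{(k)}$ with no residual factor $4^{m-q}$ as written in your sketch; had such a factor genuinely survived, the second orthogonality would not apply, but your own identity $4^{m-l}(-4)^{l-p}=(-4)^{m-p}(-1)^{m-l}$ confirms that it disappears. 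Your alternative transition-matrix/generating-function sketch is essentially the same orthogonality argument recast in matrix form.
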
 
\begin{proof}  
Using another orthogonal relation 
$$
\sum_{l=j}^m(-1)^{l-j}\stss2{m}{l}_s\sttf2{l}{j}_s=
\begin{cases} 
1&\text{$m=j$};\\
0&\text{$m\ne j$}
\end{cases}\quad(s\ge 1)
$$ 
(\cite[Theorem 5.1]{KRV2}), we have 
\begin{align*}  
{\rm RHS}&=\sum_{m=1}^n(-1)^{n-m}(2 m)!\stss2{n}{m}_2\sum_{l=0}^m 4^{m-l}\stss2{m}{l}_2\sum_{j=1}^l\frac{(-4)^{l-j}}{(2 j+1)^k}\sttf2{l}{j}_2\\ 
&=\sum_{m=1}^n(-1)^{n-m}(2 m)!\stss2{n}{m}_2\sum_{j=0}^m\frac{4^{m-j}}{(2 j+1)^k}\sum_{l=j}^m(-1)^{l-j}\stss2{m}{l}_2\sttf2{l}{j}_2\\ 
&=\sum_{m=1}^n\frac{(-1)^{n-m}(2 m)!}{(2 m+1)^k}\stss2{n}{m}_2\\
&=\mathfrak B_{2 n}^{(k)}\,. 
\end{align*}   
\end{proof}

Other relations with Stirling numbers with level $2$ are given as follows.   

\begin{theorem}  
For $n\ge 1$,  
\begin{align} 
\frac{1}{(2 n)!}\sum_{m=0}^n\sttf2{n}{m}_2\mathfrak B_{2 m}^{(k)}&=\frac{1}{(2 n+1)^k}\,,
\label{b-sum}\\ 
\sum_{m=0}^n\stss2{n}{m}_2 4^{n-m}\mathfrak C_{2 m}^{(k)}&=\frac{1}{(2 n+1)^k}\,. 
\label{c-sum}
\end{align}
\label{th:330} 
\end{theorem}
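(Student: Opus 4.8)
The plan is to derive both identities by combining the explicit Stirling-number expressions for $\mathfrak B_{2n}^{(k)}$ and $\mathfrak C_{2n}^{(k)}$ (Theorem \ref{th1} and the cited formula $\mathfrak C_{2n}^{(k)}=\sum_{m=0}^n\sttf2{n}{m}_2(-4)^{n-m}/(2m+1)^k$) with the two orthogonality relations between the Stirling numbers of the first and second kind with level $2$ that were already invoked in the proofs of Theorems \ref{th:pc2-pb2} and \ref{th:pb2-pc2}, namely $\sum_{l=j}^m(-1)^{l-j}\sttf2{m}{l}_s\stss2{l}{j}_s=[m=j]$ and its companion $\sum_{l=j}^m(-1)^{l-j}\stss2{m}{l}_s\sttf2{l}{j}_s=[m=j]$ (\cite[Theorem 5.1]{KRV2}).

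For \eqref{b-sum}, substitute the expression of Theorem \ref{th1} for $\mathfrak B_{2m}^{(k)}$, interchange the order of summation so that the inner sum runs over $m$ from $j$ to $n$, and recognize that the inner sum is $\sum_{m=j}^n(-1)^{m-j}\sttf2{n}{m}_2\stss2{m}{j}_2$, which by the first orthogonality relation collapses to the single term $j=n$. What survives is $\frac{1}{(2n)!}\cdot\frac{(2n)!}{(2n+1)^k}=\frac{1}{(2n+1)^k}$. One technical point is bookkeeping for the $m=0$ term and the hypothesis $\sttf2{n}{0}_2=0$ for $n\ge1$, but this is routine; one must also be slightly careful that the $(2m)!$ normalization from Theorem \ref{th1} cancels correctly against the $1/(2n)!$ prefactor, which it does because the orthogonality relation forces $m=j=n$.

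For \eqref{c-sum}, proceed identically, substituting $\mathfrak C_{2m}^{(k)}=\sum_{j=0}^m\sttf2{m}{j}_2(-4)^{m-j}/(2j+1)^k$, swapping the sums, and isolating $\sum_{m=j}^n\stss2{n}{m}_2 4^{n-m}(-4)^{m-j}\sttf2{m}{j}_2=4^{n-j}\sum_{m=j}^n(-1)^{m-j}\stss2{n}{m}_2\sttf2{m}{j}_2$, which by the second orthogonality relation equals $4^{n-j}[n=j]$. The powers of $4$ then evaluate to $1$, leaving $1/(2n+1)^k$ as claimed. I do not anticipate a genuine obstacle here; the only thing to watch is the sign/power bookkeeping — keeping track of $(-4)^{m-j}=(-1)^{m-j}4^{m-j}$ and matching it against the $4^{n-m}$ weight so that the orthogonality relation applies with the correct alternating sign — but once the sums are exchanged the collapse is immediate. (Alternatively, \eqref{c-sum} is equivalent to inverting the relation $\mathfrak C_{2n}^{(k)}=\sum_m\stss2{n}{m}_2(-4)^{n-m}\cdots$ against \eqref{b-sum}-type data, but the direct orthogonality argument is cleanest.)
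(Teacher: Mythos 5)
Your proposal is correct and follows essentially the same route as the paper: substitute the explicit Stirling-number expansions of $\mathfrak B_{2m}^{(k)}$ (Theorem \ref{th1}) and $\mathfrak C_{2m}^{(k)}$, interchange the summations, and let the two orthogonality relations from \cite[Theorem 5.1]{KRV2} collapse the inner sums to the $j=n$ term, with the factorials and powers of $4$ cancelling exactly as you describe. No gaps to report.
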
 

\noindent 
{\it Remark.}  
For poly-Bernoulli and poly-Cauchy numbers (\cite[Theorem 3]{Ko1}), we have 
\begin{align*} 
\frac{1}{n!}\sum_{m=0}^n\stif{n}{m}B_{m}^{(k)}&=\frac{1}{(n+1)^k}\,,\\ 
\sum_{m=0}^n\sts{n}{m}c_{m}^{(k)}&=\frac{1}{(n+1)^k}\,. 
\end{align*}

\begin{proof}[Proof of Theorem \ref{th:330}] 
By the orthogonal relations, we have 
\begin{align*}  
\frac{1}{(2 n)!}\sum_{m=0}^n\sttf2{n}{m}_2\mathfrak B_{2 m}^{(k)}&=\frac{1}{(2 n)!}\sum_{m=0}^n\sttf2{n}{m}_2\sum_{l=0}^m\stss2{m}{l}_2\frac{(-1)^{m-l}(2 l)!}{(2 l+1)^k}\\
&=\frac{1}{(2 n)!}\sum_{l=0}^n\frac{(2 l)!}{(2 l+1)^k}\sum_{m=l}^n(-1)^{m-l}\sttf2{n}{m}_2\stss2{m}{l}_2\\
&=\frac{1}{(2 n)!}\frac{(2 n)!}{(2 n+1)^k}=\frac{1}{(2 n+1)^k}
\end{align*}
and 
\begin{align*}  
\sum_{m=0}^n\stss2{n}{m}_2 4^{n-m}\mathfrak C_{2 m}^{(k)}&=\sum_{m=0}^n\stss2{n}{m}_2 4^{n-m}\sum_{l=0}^m\sttf2{m}{l}_2\frac{(-4)^{m-l}}{(2 l+1)^k}\\ 
&=\sum_{l=0}^n\frac{4^n}{(2 l+1)^k}\left(-\frac{1}{4}\right)^l\sum_{m=l}^n(-1)^m\stss2{n}{m}_2\sttf2{m}{l}_2\\
&=\frac{4^n}{(2 n+1)^k}\left(-\frac{1}{4}\right)^n(-1)^n=\frac{1}{(2 n+1)^k}\,. 
\end{align*}
\end{proof}

\section{Double summation formula}

The poly-Bernoulli numbers satisfy the duality formula $B_n^{(-k)}=B_k^{(-n)}$ for $n,k\ge 1$, because of the symmetric formula 
$$
\sum_{n=0}^\infty\sum_{k=0}^\infty B_n^{(-k)}\frac{x^n}{n!}\frac{y^k}{k!}=\frac{e^{x+y}}{e^x+e^y-e^{x+y}}\,. 
$$ 
Though the corresponding duality formula does not always hold for other cases, we still have the double summation formula for poly-Bernoulli numbers with level $2$. 

\begin{theorem}  
$$
\sum_{n=0}^\infty\sum_{k=0}^\infty\mathfrak B_{2 n}^{(-2 k)}\frac{x^{2 n}}{(2 n)!}\frac{y^{2 k}}{(2 k)!}=\frac{\cos x\cosh y}{2(1-\cos x)(1-\cosh 2 y)+\cos^2 x}\,. 
$$ 
\label{th:doublesum}
\end{theorem}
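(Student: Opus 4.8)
The plan is to compute the double generating function directly from the explicit formula of Theorem~\ref{th1}, specialised to a negative upper index. Setting $k\mapsto-2k$ in Theorem~\ref{th1} gives
$$
\mathfrak B_{2n}^{(-2k)}=\sum_{m=0}^n\stss2{n}{m}_2(-1)^{n-m}(2m)!\,(2m+1)^{2k}\,,
$$
so that
$$
\sum_{n=0}^\infty\sum_{k=0}^\infty\mathfrak B_{2n}^{(-2k)}\frac{x^{2n}}{(2n)!}\frac{y^{2k}}{(2k)!}
=\sum_{n=0}^\infty\frac{x^{2n}}{(2n)!}\sum_{m=0}^n\stss2{n}{m}_2(-1)^{n-m}(2m)!\sum_{k=0}^\infty(2m+1)^{2k}\frac{y^{2k}}{(2k)!}\,.
$$
The first step is to interchange the order of summation and evaluate the inner sum over $k$ as $\sum_{k\ge0}\frac{\bigl((2m+1)y\bigr)^{2k}}{(2k)!}=\cosh\bigl((2m+1)y\bigr)$; since this no longer depends on $n$, it can be pulled out of the sum over $n$.

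Next I would collapse the sum over $n$ by the very power-series identity already used in the proof of Theorem~\ref{th1}, namely $\bigl(2\sin\tfrac x2\bigr)^{2m}=\sum_{n\ge m}(-1)^{n-m}\frac{(2m)!}{(2n)!}\stss2{n}{m}_2x^{2n}$. This turns the whole double sum into the single series
$$
\sum_{m=0}^\infty\cosh\bigl((2m+1)y\bigr)\Bigl(2\sin\tfrac x2\Bigr)^{2m}\,.
$$
To evaluate it, write $\cosh\bigl((2m+1)y\bigr)=\tfrac12\bigl(e^{(2m+1)y}+e^{-(2m+1)y}\bigr)$, so the series splits into the two geometric series $\tfrac12 e^{y}\sum_{m\ge0}\bigl(4\sin^2\tfrac x2\,e^{2y}\bigr)^m$ and $\tfrac12 e^{-y}\sum_{m\ge0}\bigl(4\sin^2\tfrac x2\,e^{-2y}\bigr)^m$, which sum to
$$
\frac12\left(\frac{e^{y}}{1-4\sin^2(x/2)\,e^{2y}}+\frac{e^{-y}}{1-4\sin^2(x/2)\,e^{-2y}}\right)\,.
$$

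Putting this over a common denominator and using $4\sin^2\tfrac x2=2(1-\cos x)$, $e^{y}+e^{-y}=2\cosh y$ and $e^{2y}+e^{-2y}=2\cosh2y$, the numerator collapses to a multiple of $\cosh y$ and the denominator becomes a polynomial in $\cos x$ and $\cosh2y$; the last step is to rearrange this rational expression into the asserted closed form. I expect this final trigonometric bookkeeping to be the only delicate point: the combinatorial input is entirely carried by Theorem~\ref{th1} and the $\bigl(2\sin(x/2)\bigr)^{2m}$ expansion, and the interchanges of summation are harmless because everything is a genuine formal power series in $x$ and $y$ (equivalently, the geometric series converge for $x,y$ small enough).
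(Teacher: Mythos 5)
Your reduction to the single series $\sum_{m\ge 0}\bigl(2\sin\frac{x}{2}\bigr)^{2m}\cosh\bigl((2m+1)y\bigr)$ is correct, and it is essentially the paper's own route: the paper reaches this series in one step from the definition (\ref{def:pbel2}), while you reconstruct it from Theorem \ref{th1} together with the expansion of $\bigl(2\sin\frac{x}{2}\bigr)^{2m}$ --- an equivalent, if slightly roundabout, detour. The genuine gap is the step you leave as ``final trigonometric bookkeeping'' and merely expect to work: it does not. Using the identity you yourself quote, $4\sin^2\frac{x}{2}=2(1-\cos x)$, the two geometric series combine to
\begin{equation*}
\frac12\left(\frac{e^{y}}{1-2(1-\cos x)e^{2y}}+\frac{e^{-y}}{1-2(1-\cos x)e^{-2y}}\right)
=\frac{(2\cos x-1)\cosh y}{(2\cos x-1)^2+4(1-\cos x)(1-\cosh 2y)}\,,
\end{equation*}
which is not the asserted right-hand side. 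The discrepancy is real, not a matter of rearrangement: at order $x^2$ the double sum has coefficient $\cosh 3y$ (since $\mathfrak B_2^{(-2k)}=2\cdot 3^{2k}$), and so does the displayed expression, whereas $\dfrac{\cos x\cosh y}{2(1-\cos x)(1-\cosh 2y)+\cos^2 x}$ expands to $\cosh y+\tfrac12 x^2\cosh 3y+\cdots$; likewise at $y=0$ the double sum equals $\frac{1}{1-4\sin^2(x/2)}=\frac{1}{2\cos x-1}$, not $\sec x$.

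So the obstruction you would hit is not a defect of your strategy but a factor-of-two slip in the statement itself: the paper's proof passes from $1-\bigl(2e^{\pm y}\sin\frac{x}{2}\bigr)^{2}$ to $1-e^{\pm 2y}(1-\cos x)$, i.e.\ it effectively uses $4\sin^2\frac{x}{2}=1-\cos x$. If you carry your computation through you prove the corrected identity displayed above (equivalently, with denominator $1-4(1-\cos x)\cosh 2y+4(1-\cos x)^2$), rather than the formula stated in Theorem \ref{th:doublesum}. As written, however, your proposal asserts without verification that the algebra lands on the stated closed form, and that final step fails.
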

\begin{proof}  
We have 
\begin{align*}  
&\sum_{n=0}^\infty\sum_{k=0}^\infty\mathfrak B_{2 n}^{(-2 k)}\frac{x^{2 n}}{(2 n)!}\frac{y^{2 k}}{(2 k)!}\\
&=\sum_{k=0}^\infty\sum_{m=0}^\infty\left(2\sin\frac{x}{2}\right)^{2 m}(2 m+1)^{2 k}\frac{y^{2 k}}{(2 k)!}\\
&=\sum_{m=0}^\infty\left(2\sin\frac{x}{2}\right)^{2 m}\cosh\bigl((2 m+1)y\bigr)\\ 
&=\frac{e^y}{2}\sum_{m=0}^\infty\left(2\sin\frac{x}{2}\right)^{2 m}e^{2 m y}+\frac{e^{-y}}{2}\sum_{m=0}^\infty\left(2\sin\frac{x}{2}\right)^{2 m}e^{-2 m y}\\
&=\frac{e^y}{2}\frac{1}{1-\bigl(2 e^y\sin(x/2)\bigr)^2}+\frac{e^{-y}}{2}\frac{1}{1-\bigl(2 e^{-y}\sin(x/2)\bigr)^2}\\
&=\frac{1}{2}\left(\frac{e^y}{1-e^{2 y}(1-\cos x)}+\frac{e^{-y}}{1-e^{-2 y}(1-\cos x)}\right)\\
&=\frac{1}{2}\frac{e^{-y}\cos x+e^y\cos x}{2(1-\cos x)-(e^{2 y}+e^{-2 y})(1-\cos x)+\cos^2 x}\\
&=\frac{\cos x\cosh y}{2(1-\cos x)(1-\cosh 2 y)+\cos^2 x}\,. 
\end{align*}
\end{proof}

\section{Congruences}  

In this section, we shall show some congruent relations for $\mathfrak B_{n}^{(k)}$ for negative $k$.   

\begin{theorem}  
For $n,k\ge 1$, $\mathfrak B_{2 n}^{(-k)}\equiv 0\pmod 6$.  
\label{th:cong6}
\end{theorem}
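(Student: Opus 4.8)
The plan is to read the congruence straight off the explicit formula of Theorem~\ref{th1}. Substituting $-k$ for $k$ there gives
$$
\mathfrak B_{2 n}^{(-k)}=\sum_{m=0}^{n}\stss2{n}{m}_2(-1)^{n-m}(2 m)!\,(2 m+1)^{k},
$$
which in particular exhibits $\mathfrak B_{2 n}^{(-k)}$ as an integer, so the assertion $\mathfrak B_{2 n}^{(-k)}\equiv 0\pmod 6$ is meaningful. I would then reduce this sum modulo $6$ term by term and show that almost every term dies.

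The key observation is that only the indices $m=0$ and $m=1$ can contribute modulo $6$. Indeed, for $m\ge 2$ we have $2 m\ge 4$, so $(2 m)!$ is a multiple of $4!=24$, hence of $6$, and every such term vanishes modulo $6$. For $m=0$ the term carries the factor $\stss2{n}{0}_2$, which equals $0$ for $n\ge 1$ by the initial conditions for the level-$2$ Stirling numbers of the second kind, so it too contributes nothing. Finally, for $m=1$ I would invoke $\stss2{n}{1}_2=1$ (recorded in Theorem~\ref{th:100}), so the $m=1$ term is $(-1)^{n-1}\cdot 2\cdot 3^{k}$; since $k\ge 1$, this equals $(-1)^{n-1}\cdot 6\cdot 3^{k-1}\equiv 0\pmod 6$. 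Adding the three cases yields $\mathfrak B_{2 n}^{(-k)}\equiv 0\pmod 6$, which is the claim.

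I do not anticipate a real obstacle here: the whole argument is essentially a one-line modular reduction of Theorem~\ref{th1}. The only mildly delicate point is that $(2 m)!$ is not yet divisible by $6$ for $m=0$ or $m=1$, so one must still account for those two terms — but they are pinned down exactly by the base values $\stss2{n}{0}_2=0$ and $\stss2{n}{1}_2=1$, which is what makes the computation close. It is worth noting that the same bookkeeping, carried one or two indices further (tracking $(2 m)!$ for $m=2,3$ and the corresponding $\stss2{n}{m}_2$), would be the natural route to any sharper congruences for $\mathfrak B_{2 n}^{(-k)}$ to higher moduli.
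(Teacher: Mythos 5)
Your proposal is correct and follows essentially the same route as the paper: reduce the explicit formula of Theorem~\ref{th1} (with $k$ replaced by $-k$) modulo $6$, kill the $m=0$ term via $\stss2{n}{0}_2=0$, the $m\ge 2$ terms via $6\mid(2m)!$, and observe that the $m=1$ term carries the factors $2$ and $3^{k}$. The only cosmetic difference is that you pin down the $m=1$ term exactly using $\stss2{n}{1}_2=1$, whereas the paper only needs that $\stss2{n}{1}_2$ is an integer; both close the argument.
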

\begin{proof}  
For $n\ge 1$, from Theorem \ref{th1} together with $\stss2{n}{0}_2=0$ ($n\ge 1$). 
$$ 
\mathfrak B_{2 n}^{(-k)}=\sum_{m=1}^n\stss2{n}{m}_2(-1)^{n-m}(2 m)!(2 m+1)^k\,. 
$$ 
Since $2|(2 m)!$ and $3|(2 m+1)^k$ for $m=1$, and $6|(2 m)!$ for $m\ge 2$, together with the fact that the Stirling numbers of the second kind with level $2$ are integers, we get the desired result.  
\end{proof}  

\begin{theorem}  
For $n,k\ge 1$, the values of $\mathfrak B_{2 n}^{(-k)}\pmod 5$ are given in the following table.  
\begin{center} 
\begin{tabular}{|c|ccccc|}\hline
\diagbox{$n$}{$k$}&$0$&$1$&$2$&$3$&$\pmod 4$\\\hline  
$0$&$3$&$4$&$2$&$1$&\\
$1$&$2$&$1$&$3$&$4$&\\
$\pmod 2$&&&&&\\\hline 
\end{tabular}
\end{center}
\label{th:cong5}
\end{theorem}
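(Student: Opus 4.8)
The plan is to run the same reduction that established Theorem~\ref{th:cong6}, but now tracking the actual residue modulo $5$ rather than mere divisibility. The starting point is the identity
$$
\mathfrak B_{2 n}^{(-k)}=\sum_{m=1}^n\stss2{n}{m}_2(-1)^{n-m}(2 m)!\,(2 m+1)^k\qquad(n\ge 1)
$$
derived in the proof of Theorem~\ref{th:cong6} (it is Theorem~\ref{th1} with $k$ replaced by $-k$, together with $\stss2{n}{0}_2=0$). Since the level-$2$ Stirling numbers are integers and $(2 m)!\equiv 0\pmod 5$ as soon as $2 m\ge 5$, i.e.\ for $m\ge 3$, only the terms $m=1$ and $m=2$ can contribute modulo $5$, giving
$$
\mathfrak B_{2 n}^{(-k)}\equiv 2(-1)^{n-1}3^k\,\stss2{n}{1}_2+24(-1)^{n}5^k\,\stss2{n}{2}_2\pmod 5\,.
$$

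Next I would invoke the hypothesis $k\ge 1$: then $5^k\equiv 0\pmod 5$, so the $m=2$ term also drops out. Using the initial value $\stss2{n}{1}_2=1$ recorded in Theorem~\ref{th:100}, the whole expression collapses to
$$
\mathfrak B_{2 n}^{(-k)}\equiv 2(-1)^{n-1}3^k\pmod 5\qquad(n,k\ge 1)\,.
$$
The right-hand side depends only on $n$ modulo $2$ and on $k$ modulo $4$, because $(-1)^{n-1}$ has period $2$ and $3$ has multiplicative order $4$ modulo $5$, with $3^k\equiv 1,3,4,2\pmod 5$ for $k\equiv 0,1,2,3\pmod 4$ respectively.

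Finally I would fill in the table by evaluating $2(-1)^{n-1}3^k\bmod 5$ over the eight residue classes. For $n$ odd this is $2\cdot 3^k\equiv 2,1,3,4$ as $k\equiv 0,1,2,3\pmod 4$, which is the row indexed by $n\equiv 1$; for $n$ even it is the negative of this, namely $3,4,2,1$, which is the row indexed by $n\equiv 0$. These values agree with the explicit small cases $\mathfrak B_2^{(-k)}=2\cdot 3^k$ and $\mathfrak B_4^{(-k)}=-2\cdot 3^k+24\cdot 5^k$ obtained from the list of $\mathfrak B_{2n}^{(k)}$ given earlier, which serves as a sanity check.

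There is no genuine difficulty in this argument; the only points that call for a moment's care are the two $5$-adic observations — that $(2 m)!$ first becomes divisible by $5$ at $m=3$, so exactly the two boundary terms $m=1,2$ remain, and that the surviving $m=2$ term carries a factor $5^k$ that annihilates it once $k\ge 1$ — after which the periodicity and the precise table entries follow by inspection.
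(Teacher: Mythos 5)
Your proposal is correct and follows essentially the same route as the paper's proof: reduce via Theorem \ref{th1} (with $k\mapsto -k$) to the terms $m=1,2$, note that the $m=2$ term dies because of the factor $5^k$ and the $m\ge 3$ terms because $5\mid(2m)!$, obtain $\mathfrak B_{2n}^{(-k)}\equiv 2(-1)^{n-1}3^k\pmod 5$, and then use that $3$ has order $4$ modulo $5$ to fill in the table. Your tabulation of the eight residue classes matches the paper's case-by-case check, so no further changes are needed.
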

\begin{proof}  
In the terms of the summation expression of $\mathfrak B_{2 n}^{(-k)}$, $5|(2 m+1)^k$ for $m=2$, and $5|(2 m)!$ for $m\ge 3$. Hence, 
$$
\mathfrak B_{2 n}^{(-k)}\equiv(-1)^{n-1}2\cdot 3^k\pmod 5\,. 
$$  
Since $3^4\equiv 1\pmod 5$ by Fermat's little theorem, it is sufficient to check the cases for $k\equiv 0,1,2,3\pmod 4$. 
When $k\equiv 0\pmod 4$, 
$$
\mathfrak B_{2 n}^{(-k)}\equiv(-1)^{n-1}2\equiv
\begin{cases}
3&\text{($n\equiv 0\pmod 2$)}\\ 
2&\text{($n\equiv 1\pmod 2$)}
\end{cases}\pmod 5\,. 
$$ 
When $k\equiv 1\pmod 4$, 
$$
\mathfrak B_{2 n}^{(-k)}\equiv(-1)^{n-1}6\equiv
\begin{cases}
4&\text{($n\equiv 0\pmod 2$)}\\ 
1&\text{($n\equiv 1\pmod 2$)}
\end{cases}\pmod 5\,. 
$$ 
When $k\equiv 2\pmod 4$, 
$$
\mathfrak B_{2 n}^{(-k)}\equiv(-1)^{n-1}18\equiv
\begin{cases}
2&\text{($n\equiv 0\pmod 2$)}\\ 
3&\text{($n\equiv 1\pmod 2$)}
\end{cases}\pmod 5\,. 
$$ 
When $k\equiv 3\pmod 4$, 
$$
\mathfrak B_{2 n}^{(-k)}\equiv(-1)^{n-1}54\equiv
\begin{cases}
1&\text{($n\equiv 0\pmod 2$)}\\ 
4&\text{($n\equiv 1\pmod 2$)}
\end{cases}\pmod 5\,. 
$$ 
\end{proof}

\begin{theorem}  
For $n,k\ge 1$, the values of $\mathfrak B_{2 n}^{(-k)}\pmod 7$ are given in the following table.  
\begin{center} 
\begin{tabular}{|c|ccccccc|}\hline
\diagbox{$n$}{$k$}&$0$&$1$&$2$&$3$&$4$&$5$&$\pmod 6$\\\hline  
$0$&$6$&$6$&$0$&$1$&$1$&$0$&\\
$1$&$2$&$6$&$4$&$5$&$1$&$3$&\\
$2$&$1$&$2$&$1$&$6$&$5$&$6$&\\
$3$&$1$&$1$&$0$&$6$&$6$&$0$&\\
$4$&$5$&$1$&$3$&$2$&$6$&$4$&\\
$5$&$6$&$5$&$6$&$1$&$2$&$1$&\\
$\pmod 6$&&&&&&&\\\hline 
\end{tabular}
\end{center}
\label{th:cong7}
\end{theorem}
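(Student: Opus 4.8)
The plan is to imitate the proof of Theorem~\ref{th:cong5}, reducing the defining sum modulo $7$ to a short explicit expression and then using Fermat's little theorem to read off the periodicities. Starting from
$$
\mathfrak B_{2 n}^{(-k)}=\sum_{m=1}^n\stss2{n}{m}_2(-1)^{n-m}(2 m)!(2 m+1)^k
$$
(Theorem~\ref{th1} with $\stss2{n}{0}_2=0$ for $n\ge 1$), I observe that the $m$-th term is divisible by $7$ whenever $m\ge 3$: for $m=3$ the factor $(2 m+1)^k=7^k$ is divisible by $7$, while for $m\ge 4$ the factorial $(2 m)!$ already contains $7$. Since $\stss2{n}{1}_2=1$, $(2\cdot 1)!=2$ and $(2\cdot 2)!=24$, this leaves, for all $n\ge 1$,
$$
\mathfrak B_{2 n}^{(-k)}\equiv 2(-1)^{n-1}3^k+24(-1)^n 5^k\stss2{n}{2}_2\pmod 7\,.
$$
When $n=1$ the second term is absent, consistently with $\stss2{1}{2}_2=0$.

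Next I would determine $\stss2{n}{2}_2$ modulo $7$. From the recurrence $\stss2{n}{2}_2=\stss2{n-1}{1}_2+4\stss2{n-1}{2}_2=1+4\stss2{n-1}{2}_2$ with $\stss2{1}{2}_2=0$ (equivalently, from the ordinary generating function $\sum_{n\ge 2}\stss2{n}{2}_2 x^n=x^2/\bigl((1-x)(1-4 x)\bigr)$, which yields $\stss2{n}{2}_2=(4^{n-1}-1)/3$), and using $4^3\equiv 1\pmod 7$, one checks that modulo $7$ the sequence $\stss2{n}{2}_2$ is purely periodic with period $3$, equal to $0,1,5$ according as $n\equiv 1,2,0\pmod 3$. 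Combined with the period-$2$ factor $(-1)^{n-1}$, the right-hand side above depends on $n$ only through $n\bmod 6$; and since $3^6\equiv 5^6\equiv 1\pmod 7$ by Fermat's little theorem, it depends on $k$ only through $k\bmod 6$, which justifies the stated common period $6$ in both variables.

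It then remains to evaluate the $6\times 6$ array. Using $3^k\equiv 1,3,2,6,4,5$ and $5^k\equiv 1,5,4,6,2,3\pmod 7$ for $k\equiv 0,1,2,3,4,5\pmod 6$, and substituting the six pairs $\bigl((-1)^{n-1},\stss2{n}{2}_2\bmod 7\bigr)$, a direct computation reproduces the table; for example, for $n\equiv 0\pmod 6$ the expression collapses to $-2\cdot 3^k+5^k\pmod 7$ (since $24\cdot 5\equiv 1$), giving the row $6,6,0,1,1,0$. I expect no genuine difficulty here: the only points needing care are checking that no term with $3\le m\le n$ survives modulo $7$ and keeping the two $n$-periods ($2$ from the sign and $3$ from $\stss2{n}{2}_2$) together with the $k$-period $6$ correctly bookkept.
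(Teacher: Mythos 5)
Your proposal is correct and follows essentially the same route as the paper: reduce the sum from Theorem \ref{th1} modulo $7$ by noting $7\mid 7^k$ for $m=3$ and $7\mid(2m)!$ for $m\ge 4$, use $\stss2{n}{2}_2=(4^{n-1}-1)/3$ (which the paper states directly and you derive from the recurrence), and then apply Fermat's little theorem to get the period $6$ in both $n$ and $k$ before checking the $6\times 6$ table. Your handling of the residual details (e.g.\ $24\cdot 5\equiv 1\pmod 7$, sample row $n\equiv 0$) matches the paper's level of verification, which also works out one representative case and omits the rest.
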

\begin{proof} 
In the terms of the summation expression of $\mathfrak B_{2 n}^{(-k)}$, $7|(2 m+1)^k$ for $m=3$, and $7|(2 m)!$ for $m\ge 4$. Since 
$$
\stss2{n}{2}_2=\frac{4^{n-1}-1}{3}\,,
$$  
\begin{align*}
\mathfrak B_{2 n}^{(-k)}&\equiv(-1)^{n-1}2\cdot 3^k+(-1)^n\frac{4^{n-1}-1}{3}4!\cdot 5^k\\
&=(-1)^{n-1}2\cdot 3^k+(-1)^n(4^{n-1}-1)5^k\pmod 7\,. 
\end{align*} 
Since $a^6\equiv 1\pmod 7$ ($a=3,4,5$) by Fermat's little theorem, it is sufficient to check the cases for $k\equiv 0,1,2,3,4,5\pmod 6$ and $n\equiv 0,1,2,3,4,5\pmod 6$. 
When $n\equiv 2\pmod 6$, 
\begin{align*}
\mathfrak B_{2 n}^{(-k)}&\equiv-2\cdot 3^k+3\cdot 5^k\\
&\equiv\begin{cases} 
-2+3=1&\text{($k\equiv 0\pmod 6$)}\\ 
-2\cdot 3+3\cdot 5\equiv -6+1\equiv 2&\text{($k\equiv 1\pmod 6$)}\\ 
-6\cdot 3+1\cdot 5\equiv -4+5=1&\text{($k\equiv 2\pmod 6$)}\\ 
-4\cdot 3+5\cdot 5\equiv -5+4\equiv 6&\text{($k\equiv 3\pmod 6$)}\\ 
-5\cdot 3+4\cdot 5\equiv -1+6=5&\text{($k\equiv 4\pmod 6$)}\\ 
-1\cdot 3+6\cdot 5\equiv -3+2\equiv 6&\text{($k\equiv 5\pmod 6$)}\\ 
\end{cases}\pmod 7\,. 
\end{align*}
Other cases are similarly proved and omitted.  
\end{proof}

\section{Bernoulli numbers with level $2$}  

When $k=1$, {\it Bernoulli numbers} $\mathfrak B_n=\mathfrak B_n^{(1)}$ {\it with level $2$} are given by the generating function 
\begin{equation}  
\frac{1}{4\sin(x/2)}\log\frac{1+2\sin(x/2)}{1-2\sin(x/2)}=\sum_{n=0}^\infty\mathfrak B_n\frac{x^n}{n!}\,.   
\label{def:bel2}   
\end{equation}

First several values of Bernoulli numbers with level $2$ are 
\begin{multline*}  
\{\mathfrak B_{2 n}\}_{0\le n\le 10}=1,\frac{2}{3}, \frac{62}{15}, \frac{1670}{21}, \frac{47102}{15}, \frac{6936718}{33}, \frac{29167388522}{1365}, \frac{9208191626}{3}, \\
\frac{150996747969694}{255}, \frac{58943788779804242}{399}, \frac{7637588708954836042}{165}\,. 
\end{multline*}

\begin{table}[h] 
\begin{center} 
\caption{Fractional parts of Bernoulli numbers with level $2$} 
\begin{tabular}{|c||ccccccccccc|}\hline
$n$&$0$&$2$&$4$&$6$&$8$&$10$&$12$&$14$&$16$&$18$&$20$\\\hline  
$\mathfrak B_{n}\mod 1$&$0$&$\frac{2}{3}$&$\frac{2}{15}$&$\frac{11}{21}$&$\frac{2}{15}$&$\frac{19}{33}$&$\frac{272}{1365}$&$\frac{2}{3}$&$\frac{19}{255}$&$\frac{188}{399}$&$\frac{37}{165}$\\\hline   
\end{tabular}
\end{center}
\end{table} 

Note that the sequence of the denominators is the same as those of cosecant numbers 
$$
-2(2^{2 n-1}-1)B_{2^n}
$$ 
(\cite{Norlund,Riordan}). 

The von Staudt-Clausen theorem \cite{Clausen,vStaudt} states that for every $n>0$, 
$$
B_{2 n}+\sum_{(p-1)|2 n}\frac{1}{p} 
$$ 
is an integer. The sum extends over all primes $p$ for which $p-1$ divides $2 n$. In \cite{Ko20a,OS,Sanchez-Peregrino}, von Staudt-Clausen's type formulas for poly-Euler numbers, Euler numbers of the second kind and poly-Bernoulli numbers have been shown.  

We can also determine the denominators of Bernoulli numbers with level $2$ completely.  

\begin{theorem}  
For every $n>0$, 
$$
\mathfrak B_{2 n}+\sum_{(p-1)|2 n}\frac{(-1)^{n-\frac{p-1}{2}}}{p} 
$$ 
is an integer. The sum extends over all odd primes $p$ for which $p-1$ divides $2 n$. 
\label{th:vs-c}
\end{theorem}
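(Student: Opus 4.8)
The plan is to start from the explicit formula of Theorem \ref{th1} with $k=1$, namely
\begin{equation*}
\mathfrak B_{2n}=\sum_{m=0}^n\stss2{n}{m}_2\frac{(-1)^{n-m}(2m)!}{2m+1}\,,
\end{equation*}
and to analyze the fractional part of each summand modulo $1$. Writing $\stss2{n}{m}_2=T(2n,2m)$, which is an integer, I would fix a summand with index $m$ and ask when $(2m)!/(2m+1)$ has nonzero fractional part. If $2m+1$ is composite, then (except for the small exceptional case $2m+1=9$, which must be checked by hand and absorbed) every prime power dividing $2m+1$ already divides $(2m)!$, so $(2m+1)\mid(2m)!$ and the summand is an integer. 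If $2m+1=p$ is an odd prime, then by Wilson's theorem $(p-1)!\equiv-1\pmod p$, so $(2m)!/(2m+1)$ has fractional part $\equiv -1/p\pmod 1$, contributing $(-1)^{n-m}(-1)\stss2{n}{m}_2/p$ to the fractional part of $\mathfrak B_{2n}$.

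The next step is to pin down, for a given odd prime $p=2m+1$, the residue $\stss2{n}{m}_2 = T(2n,2m) \pmod p$. Here $m=(p-1)/2$, so I need $T(2n,p-1)\bmod p$. From the ordinary generating function $\sum_{n\ge k}\stss2{n}{k}_2 x^n = x^k/\bigl((1-x)(1-2^2x)\cdots(1-k^2x)\bigr)$, or equivalently from the explicit sum $\stss2{n}{m}_2=\frac{2}{(2m)!}\sum_{j=1}^m(-1)^{m-j}\binom{2m}{m-j}j^{2n}$ in (\ref{ex:32}), I would reduce modulo $p$. With $2m=p-1$, the binomial coefficients $\binom{p-1}{m-j}\equiv(-1)^{m-j}\pmod p$, and $\frac{2}{(p-1)!}\equiv -2\pmod p$ by Wilson, so $\stss2{(p-1)/2+t}{(p-1)/2}_2$ collapses modulo $p$ to something like $-2\sum_{j=1}^m j^{2n}$; by Fermat's little theorem $j^{2n}\equiv j^{2n\bmod(p-1)}$, and the power sum $\sum_{j=1}^{(p-1)/2}j^{2n}\pmod p$ is classical: it is $\equiv 0$ unless $(p-1)\mid 2n$, in which case each term is $1$ and the sum is $(p-1)/2\equiv -1/2$. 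Combining, when $(p-1)\nmid 2n$ the prime $p$ actually does not contribute (the Stirling coefficient kills it mod $p$), and when $(p-1)\mid 2n$ one gets $\stss2{n}{(p-1)/2}_2\equiv(-2)(-1/2)\equiv 1\pmod p$.

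Assembling the pieces: the fractional part of $\mathfrak B_{2n}$ comes only from primes $p$ with $p=2m+1$, i.e. $m=(p-1)/2$, and with $(p-1)\mid 2n$; each such prime contributes $(-1)^{n-m}\cdot(-1)\cdot\stss2{n}{m}_2/p \equiv (-1)^{n-(p-1)/2}\cdot(-1)\cdot 1/p \pmod 1$. That is exactly $-\sum_{(p-1)\mid 2n}(-1)^{n-(p-1)/2}/p \pmod 1$, so $\mathfrak B_{2n}+\sum_{(p-1)\mid 2n}(-1)^{n-(p-1)/2}/p$ is an integer, as claimed. I would present it by first establishing the "integer summand unless $2m+1$ prime" claim, then the congruence $\stss2{n}{(p-1)/2}_2\equiv 1$ or $0\pmod p$ according as $(p-1)\mid 2n$ or not, then collecting terms.

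The main obstacle I anticipate is the second step: controlling $T(2n,p-1)\bmod p$ cleanly, including the subtlety that when $(p-1)\nmid 2n$ the Stirling factor is divisible by $p$ and cancels the $1/p$ from Wilson's theorem (otherwise the stated sum would run over all odd primes $p\le 2n+1$, not just those with $(p-1)\mid 2n$). A secondary nuisance is handling composite values $2m+1$ with a repeated prime factor — concretely $2m+1=9,25,27,\dots$ — where one must check that $p^2\mid(2m)!$; this holds as soon as $2m\ge 2p$, i.e. for all the relevant cases except possibly $2m+1=9$, which should be verified directly and seen to contribute nothing. I would also double check the parity bookkeeping of the sign $(-1)^{n-m}$ against the sign $-1$ from Wilson and the sign inside the power-sum evaluation, since a lone sign error there changes the statement.
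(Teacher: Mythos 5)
Your argument is correct, and it shares the paper's skeleton — start from Theorem \ref{th1} with $k=1$, split on whether $2m+1$ is composite or prime, and use the explicit formula (\ref{ex:32}) — but the prime case is executed by a genuinely different pair of lemmas. The paper substitutes (\ref{ex:32}) first and, for $p=2m+1$, evaluates the alternating binomial sum $\sum_{j=1}^{(p-1)/2}(-1)^j\binom{p-1}{(p-1)/2-j}j^{2n}$ modulo $p$: Fermat reduces it to $-\binom{p-2}{(p-1)/2}$ when $(p-1)\mid 2n$, a congruence for that binomial coefficient gives $(-1)^{(p-1)/2}\frac{p+1}{2}$, and the sign is then tracked through a case split $p\equiv 1,3\pmod 4$; when $(p-1)\nmid 2n$ the reduced-exponent sum is shown to vanish mod $p$. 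You instead keep the summand as $(-1)^{n-m}\stss2{n}{m}_2\,(p-1)!/p$, extract the fractional part $-1/p$ from Wilson's theorem, and separately compute $\stss2{n}{(p-1)/2}_2\equiv -2\sum_{j=1}^{(p-1)/2}j^{2n}\pmod p$ via $\binom{p-1}{i}\equiv(-1)^i$, so the half power sum delivers $1$ or $0$ according as $(p-1)\mid 2n$ or not. What your route buys is a uniform sign with no mod-$4$ case analysis, and the vanishing for $(p-1)\nmid 2n$ (i.e.\ why the sum only runs over primes with $(p-1)\mid 2n$) comes out of the same power-sum congruence rather than a separate argument. Two small tidy-ups: the half power-sum fact deserves its one-line proof (pair $j$ with $p-j$ and use the even exponent, then invert $2$), and your worry about $2m+1=9$ is unnecessary — for $2m+1=p^2$ one always has $2p\le 2m$ (for $9$, indeed $3$ and $6$ both divide into $8!$), so the composite case needs no exceptional check, exactly as the paper asserts.
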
  

\noindent 
{\it Examples.}  
For $n=8$ and $n=9$, 
\begin{align*} 
(\mathfrak B_{16}\mod 1)-\frac{1}{3}+\frac{1}{5}+\frac{1}{17}&=0\,,\\
(\mathfrak B_{18}\mod 1)+\frac{1}{3}+\frac{1}{7}+\frac{1}{19}&=1\,. 
\end{align*}

\begin{proof}[Proof of Theorem \ref{th:vs-c}]
From Theorem \ref{th1} and (\ref{ex:32}), notice that 
\begin{align*}  
\mathfrak B_{2 n}^{(1)}&=\sum_{m=0}^n\stss2{n}{m}_2\frac{(-1)^{n-m}(2 m)!}{2 m+1}\\
&=\sum_{m=0}^n\frac{2(-1)^{n}}{2 m+1}\sum_{j=1}^m(-1)^{j}\binom{2 m}{m-j}j^{2 n}\,.
\end{align*}
For $n\ge 1$, we see $m\ge 1$ in the above summations. 

\noindent 
{\bf Case 1.} When $2 m+1$ is composite, as $m\ge 4$, $(2 m+1)|(2 m)!$. Since $\stss2{n}{m}_2$ is an integer, every such a term of 
$$
\stss2{n}{m}_2\frac{(-1)^{n-m}(2 m)!}{2 m+1}
$$ 
is an integer.  

\noindent 
{\bf Case 2.} When $2 m+1=p$ is prime, $p\ge 3$ and $m=(p-1)/2$ is an integer.  
Now, consider the summation 
$$
\sum_{j=1}^{(p-1)/2}(-1)^{j}\binom{p-1}{(p-1)/2-j}j^{2 n}\,. 
$$ 

\noindent 
{\bf Case 2.1.} 
If $\frac{p-1}{2}|n$, then by Fermat's little theorem, 
$$
j^{2 n}\equiv 1\pmod p\quad(j=1,2,\dots,p-1)\,. 
$$ 
Hence, 
\begin{align*} 
\sum_{j=1}^{(p-1)/2}(-1)^{j}\binom{p-1}{(p-1)/2-j}j^{2 n}&\equiv \sum_{j=1}^{(p-1)/2}(-1)^{j}\binom{p-1}{(p-1)/2-j}\pmod p\\
&=-\binom{p-2}{(p-1)/2}\,.
\end{align*}
The central binomial coefficient modulo prime yields 
\begin{align*} 
\binom{p-2}{k}&\equiv\frac{(-2)(-3)\cdots(-k-1)}{1\cdot 2\cdots k}=(-1)^k(k+1)\\
&=(-1)^{\frac{p-1}{2}}\frac{p+1}{2}\pmod p\quad\left(k=\frac{p-1}{2}\right)\,. 
\end{align*}
Thus, when $p\equiv 1\pmod 4$, since  
$$  
\binom{p-2}{(p-1)/2}\equiv\frac{p+1}{2}\pmod p\,, 
$$   
\begin{align*}  
\frac{2(-1)^{n}}{2 m+1}\sum_{j=1}^m(-1)^{j}\binom{2 m}{m-j}j^{2 n}&\equiv-\frac{2(-1)^{n}}{p}\frac{p+1}{2}\\ 
&\equiv-\frac{(-1)^{n}}{p}\pmod 1\,.
\end{align*}
When $p\equiv 3\pmod 4$, since 
$$  
\binom{p-2}{(p-1)/2}\equiv\frac{p-1}{2}\pmod p\,, 
$$  
\begin{align*}  
\frac{2(-1)^{n}}{2 m+1}\sum_{j=1}^m(-1)^{j}\binom{2 m}{m-j}j^{2 n}&\equiv-\frac{2(-1)^{n}}{p}\frac{p-1}{2}\\ 
&\equiv-\frac{(-1)^{n-1}}{p}\pmod 1\,. 
\end{align*}

\noindent 
{\bf Case 2.2.} 
If $\frac{p-1}{2}\nmid n$, then by Fermat's little theorem, 
$$
j^{2 n}\equiv j^{2 n-\nu(p-1)}\pmod p\quad(j=1,2,\dots,p-1) 
$$ 
for $0<2 n-\nu(p-1)<p-1$ with $\nu=\fl{(2 n)/(p-1)}$. Notice that $2 n-\nu(p-1)$ is even. 
Hence, 
\begin{align*} 
\sum_{j=1}^{(p-1)/2}(-1)^{j}\binom{p-1}{(p-1)/2-j}j^{2 n}&\equiv \sum_{j=1}^{(p-1)/2}(-1)^{j}\binom{p-1}{(p-1)/2-j}j^{2 n-\nu(p-1)}\\
&\equiv 0\pmod p\,.
\end{align*}
\end{proof}

\section{Open problems}  

In \cite{KRV1,KRV2}, Stirling numbers of both kinds with higher level are discussed. One may wonder if poly-Bernoulli numbers with level $3$ or higher can be introduced by using the Stirling numbers with level $3$ or higher. However, the situation becomes very complicated for the case with level $3$ or higher. 

\section*{Acknowledgements} 

This work was mainly done when the author was in Tokyo in 2020. He would like to thank Professor Taku Ishii of Seikei University and Professor Masatoshi Suzuki of Tokyo Institute of Technology. Under difficult circumstances it was very hard to accomplish this work without their support. 
The author thanks the anonymous referee for careful reading of the manuscript and helpful comments and suggestions. 




\end{document}